\pdfoutput=1   % make sure to use pdflatex, for the arxiv

\documentclass[a4paper, 12pt]{article}
\usepackage[a4paper, hmargin= 3cm, vmargin=2cm]{geometry}

\usepackage{graphicx}
\usepackage{amsmath, amssymb, amsfonts, amscd, amsthm, mathrsfs, mathtools, dynkin-diagrams}
\usepackage[all]{xy}
\usepackage[colorlinks, linkcolor= blue, citecolor= red]{hyperref}
\usepackage[capitalise]{cleveref}
\usepackage{pdfsync}
\usepackage{color, comment}

%%%%%%%%%%%%%%%%%%%%%%%%%%%%% fonts %%%%%%%%%%%%%%%%%%%%
\usepackage[T1]{fontenc}
\usepackage[utf8]{inputenc}

\usepackage{lmodern}

% accanthis -- attention pas de smallcaps !
%% \usepackage{accanthis}
%% \usepackage[charter]{newtxmath}
%% \usepackage[cal=dutchcal,bb= fourier,frak=euler]{mathalfa}

% fbb
%% \usepackage{fbb}
%% \usepackage[libertine]{newtxmath}
%% \usepackage[cal=boondoxo,bb= pazo,frak=euler]{mathalfa}

%%%%%%%%%%% sections %%%%%%%%%%%%%%%%%%%%

\usepackage{titlesec} 

\titleformat{\section}{\centering\sc}{\S\arabic{section}.}{.5em}{}[]

%\titleformat{\subsection}{\sl\bfseries}{\thesubsection .}{.5em}{}[]
\titleformat{\subsection}{\sl\bfseries}{}{0em}{}[]

% overrule something that the "report" document class does:
%\renewcommand\thesection{\arabic{section}}

%%%%%%%%%%%%%%%%%%%%  new commands %%%%%%%%%%%%%%%%%%%%

\newcommand{\cop}{ \, {\textstyle \coprod} \, }

\renewcommand{\phi}{\varphi} 
\newcommand{\CC}{\mathcal{C}}

\newcommand{\z}{\mathbb{Z}}

\newcommand{\Fix}{\mathrm{Fix}}

\newcommand{\Height}{\mathrm{H}}

\newcommand{\RC}{\operatorname{RC}}

 \newcommand{\widesim}{
  \mathrel{{\scalebox{1.5}[1]{$\sim$}}}
}

\newcommand{\reducesize}[2]{%
  \mathbin{% This will be a binary math symbol (in terms of spacing around it)
    \ooalign{% Overlay a number of symbols
      \raisebox% Adjust vertical positioning of <object>
       {.4ex}% Move it down relative to current font     
          {$#1\widesim$}% First symbol (\sim in correct math style)
      \cr % Move to next symbol
      \hidewidth% Move symbol to right (~\hfill)
      \raisebox% Adjust vertical positioning of <object>
        {-.6ex}% Move it down relative to current font
        {\scalebox% Change the "font size"
          {.75}% to 50% of current font size
          {$#1#2$}% <object> in current math style
        }% \raisebox
      \hidewidth% Move symbol to left (~\hfill)
    }% \ooalign
  }% \mathbin
}% \reducesize

% \reducesize

\newcommand{\nreducesize}[2]{%
  \mathbin{% This will be a binary math symbol (in terms of spacing around it)
    \ooalign{% Overlay a number of symbols
      \raisebox% Adjust vertical positioning of <object>
       {.4ex}% Move it down relative to current font     
          {$#1\not\widesim$}% First symbol (\sim in correct math style)
      \cr % Move to next symbol
      \hidewidth% Move symbol to right (~\hfill)
      \raisebox% Adjust vertical positioning of <object>
        {-.6ex}% Move it down relative to current font
        {\scalebox% Change the "font size"
          {.75}% to 50% of current font size
          {$#1#2$}% <object> in current math style
        }% \raisebox
      \hidewidth% Move symbol to left (~\hfill)
    }% \ooalign
  }% \mathbin
}% \reducesize

% \reducesize

\newcommand{\stb}[1]{\mathpalette\reducesize{#1}}
\newcommand{\nstb}[1]{\mathpalette\nreducesize{#1}}

%%%%%%%%%%%%%%%%%%%% images %%%%%%%%%%%%%%%%%%%%%%%%%%%%

\usepackage{afterpage}

%% careful : this package has a bug! nice when it works, though

\newenvironment{verticallycentered}
               {\topskip0pt\vspace*{\fill}}
               {\vspace*{\fill}\clearpage}%
               %

%% typical use
%% \topnextpage{\imageright{.5}{\begin{figname} \label{fig-foo} A caption for the figure.\end{figname}}{foo.pdf}}

% for code

%\usepackage[scaled]{beramono}  % joli police mono
\usepackage{listingsutf8}

\lstnewenvironment{pythoncode}   % tailored for python. 
 {\lstset{
     inputencoding=utf8,
     breaklines=true,
     frame= lrtb,
     framerule= 0.5pt,
     rulecolor=\color{black!15},
    language=Python,
    backgroundcolor= \color{white},
    showstringspaces=false,
    formfeed=\newpage,
    tabsize=4,
    basicstyle=\small\ttfamily,
    keywordstyle=\color{blue}\bfseries,
    identifierstyle=,
    commentstyle=\color{gray}\itshape,
    stringstyle=\color{brown},
    morekeywords={lambda, True, False, None},
    literate=
    	{é}{{\'e}}{1}%
    	{è}{{\`e}}{1}%
    	{à}{{\`a}}{1}%
    	{â}{{\^a}}{1}%%%
    	{ç}{{\c{c}}}{1}%
    	{œ}{{\oe}}{1}%
    	{ù}{{\`u}}{1}%
    	{É}{{\'E}}{1}%
    	{È}{{\`E}}{1}%
    	{À}{{\`A}}{1}%
    	{Ç}{{\c{C}}}{1}%
    	{Œ}{{\OE}}{1}%
    	{Ê}{{\^E}}{1}%
    	{ê}{{\^e}}{1}%
    	{î}{{\^i}}{1}%
    	{ï}{{\"i}}{1}%%%
    	{ô}{{\^o}}{1}%
    	{û}{{\^u}}{1}%
}}
{} 

\lstnewenvironment{gapcode}   % modified from previous one for GAP, not great
 {\lstset{
     inputencoding=utf8,
     breaklines=true,
     frame= lrtb,
     framerule= 0.5pt,
     rulecolor=\color{black!15},
    language=Python,
    backgroundcolor= \color{white},
    showstringspaces=false,
    formfeed=\newpage,
    tabsize=4,
    basicstyle=\small\ttfamily,
    keywordstyle=\color{blue}\bfseries,
    identifierstyle=,
    commentstyle=\color{gray}\itshape,
    stringstyle=\color{brown},
    morekeywords={do, od, fi},
    literate=
    	{é}{{\'e}}{1}%
    	{è}{{\`e}}{1}%
    	{à}{{\`a}}{1}%
    	{â}{{\^a}}{1}%%%
    	{ç}{{\c{c}}}{1}%
    	{œ}{{\oe}}{1}%
    	{ù}{{\`u}}{1}%
    	{É}{{\'E}}{1}%
    	{È}{{\`E}}{1}%
    	{À}{{\`A}}{1}%
    	{Ç}{{\c{C}}}{1}%
    	{Œ}{{\OE}}{1}%
    	{Ê}{{\^E}}{1}%
    	{ê}{{\^e}}{1}%
    	{î}{{\^i}}{1}%
    	{ï}{{\"i}}{1}%%%
    	{ô}{{\^o}}{1}%
    	{û}{{\^u}}{1}%
}}
{}

%% autour de tikz
\usepackage{tikz}
\usetikzlibrary{arrows}
%\usetikzlibrary{shapes}

\tikzstyle{sommet}=[circle,draw, scale=0.5]
\tikzstyle{infosommet}=[scale=0.75]
\tikzstyle{infoarete}=[midway, above, scale=0.85]

%%%%%%%%%%%%%%% theorems, definitions ... %%%%%%%%%%%%%%
%% not actually using much of it %%%%%%%%%%%%%%%%%%%%%%%
%\swapnumbers
\newtheoremstyle{pedro}{}{}{\itshape}{}{\bfseries}{.}{ }{\thmname{#1}\thmnumber{ #2}\thmnote{ (#3)}}

\newtheoremstyle{pedrodef}{}{}{}{}{\bfseries}{.}{ }{\thmname{#1}\thmnumber{ #2}\thmnote{ (#3)}}

\newtheoremstyle{pedroimage}{}{}{\itshape\footnotesize}{}{\itshape\footnotesize}{.}{ }{\thmname{#1}\thmnumber{ #2}\thmnote{ (#3)}}

%% title in roman, text in italics
\theoremstyle{pedroimage}

\theoremstyle{pedro}
\newtheorem{lem}{Lemma}[section]

\newtheorem{thm}[lem]{Theorem}
\newtheorem*{thm2}{Theorem}

\newtheorem{prop}[lem]{Proposition}

\newtheorem{coro}[lem]{Corollary}

%% all italics
\theoremstyle{remark}

%% all in roman
\theoremstyle{pedrodef}

\newtheorem{ex}[lem]{Example}

%%%%%%%%%%%%% author, title %%%%%%%%%%%%%%%%%%%%%%%%%%%%%%
\title{The binary actions of alternating groups}

\author{Nick Gill \& Pierre Guillot }

\date{}

%% important!

\emergencystretch = 0.5em
\numberwithin{equation}{section}

%%%%%%%%%%%%%%%%%%%%% GO ! %%%%%%%%%%%%%%%%%%%%%%%%%%%%%%

\begin{document}

\maketitle

\begin{abstract} 
Given a conjugacy class $\mathcal{C}$ in a group $G$ we define a new graph, $\Gamma(\mathcal{C})$, whose vertices are elements of $\mathcal{C}$; two vertices $g,h\in \mathcal{C}$ are connected in $\Gamma(\mathcal{C})$ if $[g,h]=1$ and either $gh^{-1}$ or $hg^{-1}$ is in $\mathcal{C}$.

We prove a lemma that relates the binary actions of the group $G$ to connectivity properties of $\Gamma(\mathcal{C})$. This lemma allows us to give a complete classification of all binary actions when $G=A_n$, an alternating group on $n$ letters with $n\geq 5$.

\end{abstract}

%\tableofcontents

\section{Introduction}

Let $G$ be a permutation group on a set $\Omega$. The \emph{relational complexity} of $G$ is the minimum integer $k\geq 2$ such that the orbits of $G$ on $\Omega^r$, for any $r\geq k$, can be deduced from the orbits of $G$ on $\Omega^k$. (A more precise definition will be given in \S\ref{s: background}.) Permutation groups whose relational complexity is equal to $2$ are called \emph{binary}, and the concern of this paper is to contribute to the classification of the finite binary permutation groups.

The motivation for attempting such a classification is rooted in striking results of Cherlin \cite{cherlin1} building on work of Lachlan (see, for instance, \cite{lachlan}) which show that the notion of relational complexity can be used to stratify the world of finite permutation groups in a precise sense. 

To understand how a particular finite permutation group $G$ fits into this stratification, one needs to know the relational complexity of $G$, together with another parameter, the {\em minimal number of relations} of $G$. Now Cherlin asserts that the stratification has the following property: given integers $k$ and $\ell$, the (isomorphism classes of) permutation groups of relational complexity $\le k$ and minimal number of relations $\le \ell$ fall into finitely many  infinite families, with finitely many sporadic exceptions; moreover, any permutation group, though considered sporadic in this classification for a given choice of $(k, \ell)$, will belong to one of the families for the classification corresponding to some choices $(k', \ell')$ with $k' \ge k$ and $\ell' \ge \ell$.

We stress that there is not, at present, a single pair $(k, \ell)$ for which the classification has been made explicit, although Lachlan's classification of homogeneous digraphs all but deals with the pair $(2,1)$ \cite{lachlan_digraphs}. Aside from this, though, our understanding of how this stratification works in practice is rather limited.

Over the last few years, a number of papers have been dedicated to the study of {\em binary primitive permutation groups} (i.e.\ primitive permutation groups with relational complexity equal to $2$) and a full classification of these objects is now known \cite{cherlin2, wiscons, gs_binary, ghs_binary, dgs_binary, gls_binary}.

Extending this work to cover \emph{imprimitive} binary permutation seems very difficult. A more reasonable starting point might be to understand the binary actions of important families of (abstract) groups. To this end, we propose to investigate, in a series of papers, the possibility of classifying all binary actions of groups $G$ that are \emph{almost simple}. In this paper we introduce a crucial new tool for this, and use it to deal with the alternating groups.
 
Our results all rely on the study of a graph which, so far as we are aware, is defined here for the first time: given a conjugacy class $\mathcal{C}$ in a group $G$ we define a graph, $\Gamma(\mathcal{C})$, whose vertices are elements of $\mathcal{C}$; two vertices $g,h\in \mathcal{C}$ are connected in $\Gamma(\mathcal{C})$ if $g$ and $h$ commute and either $gh^{-1}$ or $hg^{-1}$ is in $\mathcal{C}$.
 
 The connection to binary actions is achieved via the following result which is stated again, using slightly different language, as Corollary~\ref{coro-connected-comp}.
 
\begin{lem}\label{l: graph}
Let $G$ be a transitive permutation group on a set $\Omega$. Let $\CC$ be a conjugacy class of elements of prime order $p$ of maximal fixity. Let $H$ be the stabilizer of a point in $\Omega$ and let $g\in H\cap \mathcal{C}$. Then $H$ contains all vertices in the connected component of $\Gamma(\CC)$ that contains $g$.
\end{lem}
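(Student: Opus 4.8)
The plan is to reduce the assertion to a statement about single edges of $\Gamma(\CC)$. Write $\alpha$ for the point of $\Omega$ stabilised by $H$ and $\Fix(x)\subseteq\Omega$ for the fixed-point set of $x$. It suffices to prove that $\Fix(g)=\Fix(h)$ whenever $\{g,h\}$ is an edge of $\Gamma(\CC)$, for then $\Fix$ is constant along each connected component, so $\alpha\in\Fix(g)$ gives $\alpha\in\Fix(h)$ for every $h$ in the component of $g$, i.e.\ each such $h$ lies in $H$. The conclusion $\Fix(g)=\Fix(h)$ is symmetric in $g$ and $h$, so given an edge we may assume $k:=gh^{-1}\in\CC$. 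Set $V=\langle g,h\rangle$, an abelian group generated by two elements of order $p$. If $V$ is cyclic then $h\in\langle g\rangle$ and $\Fix(h)=\Fix(g)$; so assume $V\cong(\z/p)^2$. Then $\langle g\rangle,\langle h\rangle,\langle k\rangle$ are three distinct subgroups of order $p$, the map $\Fix$ is constant on the nonidentity elements of each, and any two nonidentity elements $x,y$ lying in different ones satisfy $\Fix(x)\cap\Fix(y)=\Fix(V)$; in particular $\Fix(g)\cap\Fix(h)=\Fix(g)\cap\Fix(k)=\Fix(h)\cap\Fix(k)=\Fix(V)$. As $g,h,k\in\CC$, each of $\Fix(g),\Fix(h),\Fix(k)$ has cardinality $f$, the maximal fixity of $G$.

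Here binarity of the action enters, through the following remark: no nonidentity element of $G$ has more than $f$ fixed points, so if $X\subseteq\Omega$ has $|X|=f$ and equals the fixed-point set of some nonidentity element, then the pointwise stabiliser $G_{(X)}$ acts semiregularly on $\Omega\setminus X$ — an element of $G_{(X)}$ fixing one further point would fix more than $f$ points and so be trivial. Suppose now, for contradiction, that $\Fix(g)\ne\Fix(h)$. Since both sets have size $f$ this forces $\Fix(g)\ne\Fix(V)$; moreover $\Fix(k)\ne\Fix(V)$, since otherwise $\Fix(k)=\Fix(V)\subseteq\Fix(g)$ would give $\Fix(g)=\Fix(V)$ and hence $\Fix(g)=\Fix(h)$. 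Choose a $V$-orbit $A=\{a_0,a_1,\dots,a_{p-1}\}$ of size $p$ contained in $\Fix(g)\setminus\Fix(V)$; on $A$ the element $g$ acts trivially, $h$ acts as a $p$-cycle, say $a_ih=a_{i+1}$ with indices modulo $p$, and $k=gh^{-1}$ acts as $h^{-1}$.

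The heart of the argument is to exhibit two tuples that are $2$-equivalent but not $G$-equivalent, contradicting binarity. Let $\bar a$ be the concatenation of a fixed enumeration of $\Fix(h)$, a fixed enumeration of $\Fix(k)$, and the block $a_0,a_1,\dots,a_{p-1}$; let $\bar b$ be obtained from $\bar a$ by replacing that last block with $a_1,a_2,\dots,a_{p-1},a_0$. For $2$-equivalence one inspects pairs of coordinate positions: pairs lying inside the two enumerations are unchanged, so the identity serves; a pair $(x,a_i)$ with $x\in\Fix(h)$ is carried to $(x,a_{i+1})$ by $h$; a pair $(x,a_i)$ with $x\in\Fix(k)$ is carried to $(x,a_{i+1})$ by $k^{-1}$ (which fixes $\Fix(k)$ pointwise and acts as $h$ on $A$); and a pair $(a_i,a_j)$ is carried to $(a_{i+1},a_{j+1})$ by $h$. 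For the failure of $G$-equivalence, any $w\in G$ with $\bar aw=\bar b$ fixes $\Fix(h)\cup\Fix(k)$ pointwise and satisfies $a_iw=a_{i+1}$. Restricting to $\Fix(h)$ gives $w\in G_{(\Fix(h))}$, and $a_0,a_1\notin\Fix(h)$ because $\Fix(g)\cap\Fix(h)=\Fix(V)$; since $h\in G_{(\Fix(h))}$ also carries $a_0$ to $a_1$, semiregularity forces $w=h$. But $h$ does not fix $\Fix(k)$ pointwise: on a $\langle k\rangle$-orbit inside $\Fix(k)\setminus\Fix(V)$ (nonempty since $\Fix(k)\ne\Fix(V)$) the element $h$ acts as $g$, which acts nontrivially there as $g\notin\langle k\rangle$. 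This contradiction proves $\Fix(g)=\Fix(h)$, and the lemma follows.

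I expect the fiddly part to be the pair-by-pair verification of $2$-equivalence — matching each type of pair with an explicit group element realising it — while the non-equivalence half becomes short once the semiregularity remark is in place. One should also record that the argument is unchanged for $p=2$: there a "cyclic shift by one" of the two-element block $A$ is just the transposition swapping $a_0$ and $a_1$, and $h=h^{-1}$.
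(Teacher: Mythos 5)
Your construction is essentially the paper's: both proofs take an edge $\{g,h\}$, pass to the elementary abelian group $V=\langle g,h\rangle$, and exhibit two tuples built from the fixed-point sets of $g$, $h$ and $k=gh^{-1}$ which are $2$-related (via the identity, $h$, and $k^{-1}$) but cannot be realised by a single group element. Your $2$-relatedness verification is correct. The gap is in the non-equivalence half, at the remark that ``no nonidentity element of $G$ has more than $f$ fixed points'' and the semiregularity of $G_{(\Fix(h))}$ on $\Omega\setminus\Fix(h)$ that you deduce from it. Maximal fixity, as the paper defines it right after the lemma, bounds the number of fixed points only of elements \emph{of order $p$}; it says nothing about elements of other orders, and binarity does not supply the stronger claim either (in the binary action of $S_n$ on $n$ points, with $p=3$, a transposition fixes more points than any $3$-cycle). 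So a nontrivial element of $G_{(\Fix(h))}$ of order coprime to $p$ could a priori fix additional points, semiregularity fails, and the step ``semiregularity forces $w=h$'' is unjustified.

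The gap is repairable with material you already have, and the repair is exactly what the paper does: your hypothetical $w$ fixes $\Fix(h)\cup\Fix(k)$ pointwise, a set of size $2f-|\Fix(V)|>f$, and it induces a $p$-cycle on the invariant set $A$, so $p$ divides the order of $w$; hence a suitable power of $w$ is an element of order $p$ fixing at least $|\Fix(w)|>f$ points, contradicting maximal fixity directly. With that substitution (the subsequent argument that $h$ does not fix $\Fix(k)$ pointwise becomes unnecessary), your proof is correct and coincides with the paper's up to cosmetic differences: you move a single $V$-orbit inside $\Fix(g)$ where the paper moves all of $\Fix(gh^{-1})$ and keeps $\Fix(g)\cup\Fix(h)$ as the fixed block.
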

 
 Note that \emph{an element of order $p$ of maximal fixity} is simply an element of $G$ of order $p$ that fixes at least as many points of $\Omega$ as any other element of $G$ of order $p$.
 
 It turns out, for instance, that when $G=A_n$ and $p=2$, the graph $\Gamma(\mathcal{C})$ is often connected. (Proposition~\ref{prop-graphs-An} gives a precise statement.) This heavily restricts the possible transitive binary actions of $G=A_n$ for which a point-stabilizer has even order.
 
 This fact, together with a further analysis for $p$ odd, is the basis of our main result. Note that, in the following statement, we speak of a binary action, rather than a binary permutation group; this is defined in the obvious way, and allows a simpler formulation here.
 
\begin{thm}\label{thm-binary-actions-An}
Let $G$ be the alternating group $A_n$ with $n\geq 6$. Assume that there is a binary action of $G$ on the set $\Omega$. Then each orbit of $G$ on $\Omega$ is either trivial or regular.
\end{thm}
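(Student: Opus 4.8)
The plan is to pass to the transitive case and then assume for a contradiction that a point stabiliser $H$ of a transitive binary action of $G=A_n$ ($n\geq 6$) satisfies $1\neq H\neq G$, driving a contradiction with Lemma~\ref{l: graph} and Proposition~\ref{prop-graphs-An}.

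\textbf{Reduction to the transitive case.} If $G$ acts binarily on a set $\Omega$ and $\Omega_0\subseteq\Omega$ is a $G$-invariant subset, then the induced action of $G$ on $\Omega_0$ is again binary: two $r$-tuples of elements of $\Omega_0$ that are $2$-equivalent for the action on $\Omega_0$ are, when their entries are regarded as elements of $\Omega$, $2$-equivalent for the action on $\Omega$, and hence lie in a common $G$-orbit. Applying this to each $G$-orbit, it is enough to prove: if $G=A_n$ with $n\geq 6$ admits a \emph{transitive} binary action with point stabiliser $H$, then $H=1$ (a regular orbit) or $H=G$ (a trivial orbit). So assume, for a contradiction, that $1\neq H\neq G$.

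\textbf{The even case.} Let $p$ be a prime dividing $|H|$. Then $H$ contains an element of order $p$, which fixes the base point of $\Omega$ (the point with stabiliser $H$); hence the maximal fixity of elements of order $p$ is positive, so some order-$p$ element $g$ of maximal fixity fixes a point. The stabiliser of that point is a conjugate $xHx^{-1}$ of $H$ containing $g$, so $x^{-1}gx\in H\cap\CC$, where $\CC=g^{G}$ is a conjugacy class of elements of order $p$ of maximal fixity. By Lemma~\ref{l: graph}, $H$ then contains every vertex of the connected component of $x^{-1}gx$ in $\Gamma(\CC)$. Now take $p=2$. By Proposition~\ref{prop-graphs-An}, $\Gamma(\CC)$ is connected except for a short list of pairs $(n,\CC)$; when it is connected, $H\supseteq\CC$, and since $\CC$ generates a nontrivial normal subgroup of the simple group $A_n$ we get $H\supseteq\langle\CC\rangle=G$, a contradiction. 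In the remaining, disconnected cases one instead aims to show that a single connected component of $\Gamma(\CC)$ already generates $A_n$ — again contradicting $H\neq G$ — leaving only a handful of genuinely small configurations (where the exceptional isomorphisms of $A_6$ may intervene) to be settled directly, e.g.\ by exhibiting a $2$-equivalent but not orbit-equivalent pair of tuples.

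\textbf{The odd case, and the main obstacle.} The previous step forces $|H|$ to be odd, so we may now take $p$ to be an \emph{odd} prime dividing $|H|$; exactly as before, $H$ meets the class $\CC$ of elements of order $p$ of maximal fixity, and contains the connected component of some $g\in H\cap\CC$ in $\Gamma(\CC)$. The trouble is that for odd $p$ the graph $\Gamma(\CC)$ is typically disconnected, so there is no verbatim analogue of Proposition~\ref{prop-graphs-An}; supplying the ``further analysis for $p$ odd'' is where the real work lies. The plan is: first, exploit the solvability and the restricted structure of odd-order subgroups of $A_n$ to pin down which classes $\CC$ can have maximal fixity, reducing to $\CC$ being a class of elements of small support, plausibly $p$-cycles; second, prove, for such $\CC$ and for $n$ not too small relative to $p$, the statement that actually matters, namely that the connected component of any one $g\in\CC$ in $\Gamma(\CC)$ generates $A_n$ (weaker than connectivity of $\Gamma(\CC)$ but still incompatible with $H\neq G$); third, dispose of the finitely many remaining small cases directly. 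The second point — a generation statement for the components of $\Gamma(\CC)$ when $\CC$ consists of odd-order elements — is the crux: centralisers of odd-order elements of $A_n$ afford far less room than centralisers of involutions, so assembling enough edges of $\Gamma(\CC)$ to connect a generating set is considerably more delicate than in the case $p=2$, and I expect the bulk of the proof to be concentrated there.
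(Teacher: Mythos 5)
Your reduction to the transitive case and your treatment of the even case broadly match the paper, but there are two problems, one minor and one fatal. The minor one: in the even case the disconnected situation is not ``a handful of genuinely small configurations'' but the infinite family $n=4k+1$, where by Proposition~\ref{prop-graphs-An} one connected component is $\CC\cap A_{n-1}$ and the corresponding component group is exactly $A_{n-1}$ --- it never generates $A_n$, so your proposed dichotomy (either the component generates $A_n$ or the case is small) fails. The paper's resolution is different: $H\supseteq A_{n-1}$ together with the maximality of $A_{n-1}$ forces $H=A_{n-1}$ or $H=A_n$, and the former gives the natural degree-$n$ action, which has relational complexity $n-1$ (Example~\ref{ex-natural-alternating-action}), a contradiction. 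This is easily repaired.

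The fatal problem is the odd case. The step you yourself flag as the crux --- that for an odd prime $p$ and a suitable class $\CC$ of $p$-elements of maximal fixity, the connected component of $g\in\CC$ in $\Gamma(\CC)$ generates $A_n$ --- is false, not merely delicate. Take $\CC$ to be the class of $p$-cycles. Two commuting $p$-cycles have either equal or disjoint supports; if the supports are disjoint then $xy^{-1}$ has support of size $2p$ and is not in $\CC$, so edges can only join a $p$-cycle to certain of its own powers. Hence the component group of any $g\in\CC$ is $\langle g\rangle$, cyclic of order $p$, and no generation statement of the kind you need is available. The graph machinery alone therefore cannot close the odd case: it only yields (as the paper shows) that $H$ contains $p$-cycles (resp.\ $3$-cycles). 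At that point the paper changes tools entirely. It produces three conjugate $p$-cycles $h_1,h_2,h_3$ with $h_1h_2h_3=1$, lying in conjugates $H_1,H_2,H_3$ of $H$, and shows --- via a computation of the intersection of the normalizers of $\langle h_1\rangle$ and $\langle h_2\rangle$ in $S_p$ (Lemma~\ref{lem-inter-normalizers}), combined with the fact that $H$ has odd order and normalizes an elementary abelian subgroup generated by the $p$-cycles it contains --- that the triple cannot be replaced by one with $h_2'\in H_1\cap H_2$ and $h_3'\in H_3$ satisfying $h_1h_2'h_3'=1$; by Lemma~\ref{lem-criterion-2-not-implies-3} the action is then not binary. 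Your proposal contains no substitute for this second mechanism, so the odd case is a genuine gap.
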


In a subsequent paper, we shall give similar results for simple groups having a single conjugacy class of involutions, among other groups. This will include $A_5$, for which the above theorem fails to hold. 

A third paper, with M. Liebeck, will fully describe the graph $\Gamma(\CC)$ when $\CC$ is a conjugacy class of involutions in a simple group of Lie type of characteristic $2$. It is our hope that $\Gamma(\CC)$ will become an object of investigation in its own right.

One naturally wonders to what extent the results for $A_n$ extend to $S_n$. It turns out that the situation is a little different for the symmetric group, as  it is possible to exhibit several families of transitive binary actions on $\Omega$, the set of cosets of a subgroup $H<S_n$. For example:
\begin{enumerate}
 \item $H=\{1\}$ or $H=A_n$;
 \item $H=\langle g\rangle$ where $g$ is an odd permutation of order $2$;
 \item $H\cong S_{n-d}$ where $1\leq d\leq n-1,$ and the action of $S_n$ on $\Omega$ is permutation isomorphic to the natural action on the set of $d$ tuples of elements from $\{1,\dots, n\}$.
\end{enumerate}
(Note that we say ``permutation isomorphic'' rather than ``permutation equivalent'' in item (3) to account for the case $n=6$.) Computation with {\tt GAP} shows that for $n=5,\dots, 10$, these are the only transitive binary actions of $S_n$.

% \subsection{Structure of the paper}

% In \S\ref{s: background} we give a full definition of a binary action, along with the more general notion of the \emph{relational complexity} of a group action. We give a number of criteria for binary actions, one of which involves the graph $\Gamma(\mathcal{C})$ -- it is essentially a stronger version of Lemma~\ref{l: graph} above. 

% In \S\ref{s: alternating} we prove Theorem~\ref{thm-binary-actions-An}, the classification of binary actions of the finite alternating groups. Our work here relies on a number of lemmas in \S\ref{s: background} which yield the classification of all binary actions of $A_n$ from the classification of all \emph{transitive} binary actions of $A_n$.

% In \S\ref{s: one-class} we prove Theorem~\ref{t: se_strong}, concerning the binary actions of simple groups that contain a single conjugacy class of involutions. Finally, \S\ref{sec-epilogue} gives the classification of binary actions for $\PSL_2(q)$.

\section{Background on relational complexity}\label{s: background}

\subsection{Complexity \& some basic criteria}

All groups mentioned in this paper are finite, and all group actions are on finite sets. We consider a group $G$ acting on a finite set $\Omega$ (on the right) and we begin by defining the {\em relational complexity}, $\RC(G, \Omega)$, of the action, which is an integer greater than $1$. 

There are at least two equivalent definitions of $\RC(G,\Omega)$. The first definition involves {\em homogeneous relational structures} on $\Omega$, and while it is useful in particular for motivation, we shall work exclusively with a different definition in terms of the action of $G$ on tuples. More information on the first definition, as well as a wealth of results on relational complexity, can be found in \cite{gls_binary}.  

So let $I, J \in \Omega^n$ be $n$-tuples of elements of $\Omega$, for some $n \ge 1$, written $I=(I_1, \ldots, I_n)$ and $J= (J_1, \ldots, J_n)$. For $r \le n$, we say that $I$ and $J$ are {\em $r$-related}, and we write $I \stb{r} J$, when for each choice of indices $1 \le k_1 < k_2 < \cdots < k_r \le n$, there exists $g \in G$ such that $I_{k_i}^g = J_{k_i}$ for all $i$. (An alternative terminology is to say that $I$ and $J$ are $r$-subtuple complete with respect to $G$.)

Now the {\em relational complexity} of the action of $G$ on $\Omega$, written $\RC(G, \Omega)$, is the smallest integer $k \ge 2$ such that whenever $n \ge k$ and $I, J \in \Omega^n$ are $k$-related, then $I$ and $J$ are $n$-related (or in other words there exists $g \in G$ with $I^g = J$). One can show that such an integer always exists and indeed, it is always less than $|\Omega|$ (or it is $2$ if $|\Omega|=1$ or $2$), see \cite{gls_binary}.  Which brings us to a quick comment about the condition $k \ge 2$: there is no universally accepted definition for actions of complexity $1$ or $0$, as several obvious putative definitions do not agree. In any case, only trivial cases would be deemed to have complexity $<2$, and at present the convention is to ignore these, so the minimum complexity that we allow is 2. When $\RC(G, \Omega)=2$, we say that the action is {\em binary}.

\begin{ex} \label{ex-natural-alternating-action}
The action of the symmetric group $S_d$ on $\{ 1,2, \ldots, d \}$ is obviously binary (a simple, general remark is that one only needs to consider triples of distinct elements, so in this example we only look at $n$-tuples with $n \le d$). On the other hand, the complexity of the action of $A_d$ on the same set is $d-1$. To see this, following Cherlin, consider the tuples $(1, 2, \ldots, d-2, d-1)$ and $(1, 2, \ldots, d-2, d)$: these are not $(d-1)$-related, but they are $k$-related for any $k < d-1$.
\end{ex}

\begin{ex}
There are two obvious examples of binary actions, for every group $G$. First, there is the case when $\Omega$ contains a single element; we call this the {\em trivial} binary action. The other canonical example is the {\em regular} binary action, that is when $\Omega = G$ with its action on itself by multiplication. It is an easy exercise to check that the latter is indeed binary; indeed one can go further and check that all {\em semiregular} actions are binary.%, and it is also a consequence of Lemma \ref{lem-ineq-RC-height} below; indeed this lemma implies that the  
\end{ex}

We are about to state a basic criterion for binariness that will be of constant use. Before we do this however, it is best to establish the next lemma, revealing the symmetry of a certain situation.

\begin{lem} \label{lem-symmetry-h1h2h3}
Let $G$ be a group. For $i=1, 2, 3$, let $H_i$ be a subgroup of $G$, and let $h_i \in H_i$. Assume that $h_1 h_2 h_3 = 1$. Then the following conditions are equivalent: \begin{enumerate}
\item there exist $h_2' \in H_2 \cap H_1$ and $h'_3 \in H_3 \cap H_1$ such that $h_1 h_2' h_3' = 1$;
\item there exist $h_1' \in H_1 \cap H_2$ and $h'_3 \in H_3 \cap H_2$ such that $h_1' h_2 h_3' = 1$;
\item there exist $h_1' \in H_1 \cap H_3$ and $h_2' \in H_2 \cap H_3$ such that $h_1' h_2' h_3= 1$.
\end{enumerate}
\end{lem}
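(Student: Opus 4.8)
The plan is to use the cyclic symmetry of the situation to reduce everything to one short computation. Note first that the hypothesis $h_1h_2h_3=1$ is invariant under cyclic rotation of the indices: it is equivalent to $h_2h_3h_1=1$ and to $h_3h_1h_2=1$. The same elementary fact — that a product of group elements is trivial exactly when any cyclic rearrangement of it is — shows that relabelling the indices cyclically turns condition (1) into condition (2), condition (2) into condition (3), and condition (3) into condition (1). (For instance, the relabelled form of (1) is $h_2\, x\, y = 1$ with $x \in H_2\cap H_3$ and $y \in H_1\cap H_2$, which is the same as $y\, h_2\, x = 1$, namely condition (2).) Hence it suffices to prove the single implication (1) $\Rightarrow$ (2); applying it to the cyclically relabelled data then yields (2) $\Rightarrow$ (3) and (3) $\Rightarrow$ (1), and the three conditions are equivalent.

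For (1) $\Rightarrow$ (2), suppose $h_2' \in H_1 \cap H_2$ and $h_3' \in H_1 \cap H_3$ satisfy $h_1 h_2' h_3' = 1$. Comparing this with $h_1 h_2 h_3 = 1$ gives $h_2 h_3 = h_1^{-1} = h_2' h_3'$, and therefore
\[
v \;:=\; (h_2')^{-1} h_2 \;=\; h_3' h_3^{-1}.
\]
The first expression for $v$ shows that $v \in H_2$ (both its factors lie in $H_2$), and the second shows that $v \in H_3$; hence $v \in H_2 \cap H_3$. This is the only point at which the relation $h_1 h_2 h_3 = 1$ is used. Now put $h_1'' := (h_2')^{-1}$ and $h_3'' := v^{-1}$. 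Then $h_1'' \in H_1 \cap H_2$ since $h_2' \in H_1 \cap H_2$, and $h_3'' \in H_2 \cap H_3$ since $v \in H_2 \cap H_3$, while
\[
h_1'' \, h_2 \, h_3'' \;=\; (h_2')^{-1} h_2 \, v^{-1} \;=\; v v^{-1} \;=\; 1,
\]
which is condition (2).

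I do not expect a real obstacle here; the whole content is the observation that the element $(h_2')^{-1} h_2$ can equally be written as $h_3' h_3^{-1}$ and so lies in $H_2 \cap H_3$. The only things to watch are that the $h_i$ need not commute (so left and right must be kept straight throughout) and that the elements exhibited as witnesses genuinely lie in the claimed intersections. As a variant, one may avoid the relabelling step altogether by checking that each condition (i) is equivalent to a factorization of the single element $h_i$ as a product of an element of one pairwise intersection by an element of another, and then verifying directly from $h_1 h_2 h_3 = 1$ that these three factorization statements are equivalent; the manipulation needed is essentially the one above.
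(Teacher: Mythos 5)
Your proof is correct and rests on exactly the same key observation as the paper's: the element $(h_2')^{-1}h_2 = h_3'h_3^{-1}$ lies in $H_2\cap H_3$ and can be used to rewrite the product. The only cosmetic difference is that you prove $(1)\Rightarrow(2)$ and dispatch the remaining implications by explicit cyclic relabelling, whereas the paper proves $(1)\Rightarrow(3)$ and declares the others ``similar''.
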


\begin{proof}
Assume (1), so that $h_2 h_3 = h_2' h_3'$, which we rewrite as $h_3' h_3^{-1} = (h_2')^{-1} h_2 \in H_2 \cap H_3$. We have $(h_3')^{-1} (h_3' h_3^{-1})h_3 = 1$, with $(h_3')^{-1} \in H_1 \cap H_3$ and $h_3' h_3^{-1} \in H_2 \cap H_3$, so we have established (3). All the other implications are similar. 
\end{proof}

When we have elements with $h_1 h_2 h_3=1$ as above, and when the equivalent conditions of the lemma fail to hold, we think of the triple $(h_1, h_2, h_3)$ as ``minimal'' or ``optimal'', in the sense that it cannot be ``improved'' to a triple with all three elements taken from the same subgroup. As we shall see presently, the presence of such an optimal triple, with the groups $H_i$ conjugates of a single subgroup $H$, is an obstruction to the binariness of the action of $G$ on $(G:H)$.

\begin{lem}[basic criteria] \label{lem-criterion-2-not-implies-3}
    Let~$G$ act on~$\Omega $. The following conditions are equivalent:

    \begin{enumerate}
    \item There exist $I,J \in \Omega^3$ such that $I \stb{2} J$ but $I \nstb{3}J$.
    \item There are points~$\alpha_i \in \Omega$ with stabilizers~$H_i$, and elements~$h_i \in H_i$, for~$i=1,2,3$, satisfying :
  
    \begin{enumerate}
    \item $h_1 h_2 h_3 = 1$,
    \item there do NOT exist~$h_2' \in H_2 \cap H_1$, $h_3' \in H_3 \cap H_1$ with~$h_1 h_2' h_3' = 1$.
    \end{enumerate}
  
(See also Lemma \ref{lem-symmetry-h1h2h3} above.)

\item There are points~$\alpha_i \in \Omega$ with stabilizers~$H_i$, for~$i=1,2,3$, such that $H_1 \cap H_2 \cdot H_3$ is not included in $H_1 \cap (H_1 \cap H_2) \cdot H_3$.

    \end{enumerate}
    
    In particular, when these conditions hold, the action of $G$ on $\Omega$ is not binary.
  \end{lem}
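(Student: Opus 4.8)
The plan is to establish the two equivalences $(1)\Leftrightarrow(2)$ and $(2)\Leftrightarrow(3)$; the final assertion then needs no extra work, because a pair $I,J\in\Omega^3$ with $I\stb2 J$ but $I\nstb3 J$ is precisely a pair of $3$-tuples that is $2$-related but not $3$-related — and, for $3$-tuples, being $3$-related means lying in a common $G$-orbit — so $2$ fails the defining property and $\RC(G,\Omega)>2$, i.e.\ the action is not binary.

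For $(1)\Rightarrow(2)$ I would start from $I\stb2 J$, $I\nstb3 J$ and first normalise $J$. Since $I\stb r J$ is unchanged when $J$ is replaced by $J^g$ (a witness $h$ for an $r$-subset becomes $hg$), and likewise $I^h=J\Leftrightarrow I^{hg}=J^g$, I may use the pair $\{1,2\}$ to arrange that $I$ and $J$ agree in their first two coordinates, say $I_1=J_1=\alpha_1$ and $I_2=J_2=\alpha_2$; set $\alpha_3=I_3$, $\beta_3=J_3$ and let $H_i$ be the stabiliser of $\alpha_i$. Applying $2$-relatedness to $\{1,3\}$ and $\{2,3\}$ gives $a_1\in H_1$ and $a_2\in H_2$ with $\alpha_3^{a_1}=\beta_3=\alpha_3^{a_2}$; then $h_1:=a_1$, $h_2:=a_2^{-1}$, $h_3:=a_2a_1^{-1}$ satisfy $h_1h_2h_3=1$, and $h_3\in H_3$ because $\alpha_3^{h_3}=\beta_3^{a_1^{-1}}=\alpha_3$; this is (a). For (b), if there were $h_2'\in H_2\cap H_1$ and $h_3'\in H_3\cap H_1$ with $h_1h_2'h_3'=1$, then $g:=(h_2')^{-1}=h_3'h_1$ would lie in $H_1\cap H_2$ and satisfy $\alpha_3^g=\alpha_3^{h_3'h_1}=\alpha_3^{h_1}=\beta_3$, whence $I^g=J$, contradicting $I\nstb3 J$.

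For $(2)\Rightarrow(1)$, given $\alpha_i,H_i,h_i$ as in (2), I would take $I=(\alpha_1,\alpha_2,\alpha_3)$ and $J=(\alpha_1,\alpha_2,\alpha_3^{h_1})$; by (a), $h_1h_2=h_3^{-1}\in H_3$, so $\alpha_3^{h_1}=\alpha_3^{h_2^{-1}}$ as well. Then $I\stb2 J$, with witnesses $1$, $h_1$, $h_2^{-1}$ for the pairs $\{1,2\}$, $\{1,3\}$, $\{2,3\}$. If some $g$ satisfied $I^g=J$, then $g\in H_1\cap H_2$ and $gh_1^{-1}\in H_3$; putting $h_2':=g^{-1}\in H_1\cap H_2$ and $h_3':=gh_1^{-1}$, which lies in $H_1$ (a product of elements of $H_1$) and hence in $H_1\cap H_3$, one gets $h_1h_2'h_3'=1$, contradicting (b). So $I\nstb3 J$ and $(1)$ holds.

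Finally $(2)\Leftrightarrow(3)$ is a set-theoretic rewriting of the same data. The key observation is that whenever $h_1h_2'h_3'=1$ with $h_2'\in H_1\cap H_2$, the element $h_3'=(h_2')^{-1}h_1^{-1}$ automatically lies in $H_1$; hence for a fixed $h_1\in H_1$, clause (b) is equivalent to $h_1^{-1}\notin(H_1\cap H_2)\cdot H_3$, whereas the existence of $h_2\in H_2$, $h_3\in H_3$ realising (a) is equivalent to $h_1^{-1}\in H_2\cdot H_3$. So the existence of $h_1\in H_1$ meeting (a) and (b) is exactly $H_1\cap(H_2\cdot H_3)\not\subseteq(H_1\cap H_2)\cdot H_3$, and since every element in sight lies in $H_1$ this non-inclusion is the same as the one appearing in (3); quantifying over the triple of points gives $(2)\Leftrightarrow(3)$. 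The only real care is needed in the normalisation step of $(1)\Rightarrow(2)$ and, dually, in extracting from a hypothetical $g$ with $I^g=J$ elements $h_2',h_3'$ that land in the correct intersections — both of which come down to the trivial fact that a product of elements of $H_1$ stays in $H_1$ — so I do not anticipate a serious obstacle.
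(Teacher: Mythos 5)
Your proof is correct and follows essentially the same route as the paper: the paper proves $(2)\Rightarrow(1)$ by exactly your construction $J=(\alpha_1,\alpha_2,\alpha_3^{h_1})$, dismisses $(2)\Leftrightarrow(3)$ as a reformulation, and leaves $(1)\Rightarrow(2)$ as ``similar'' --- you have simply written out those two parts in full (correctly).
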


  \begin{proof}
It is clear that (3) is a simple reformulation of (2). Now assume (2), and let us prove (1). Let~$\alpha_4 = \alpha_3^{h_1} = \alpha_3^{h_2^{-1}}$, using condition (a). The triples~$(\alpha_1, \alpha_2, \alpha_3)$ and~$(\alpha_1, \alpha_2, \alpha_4)$ are~$2$-related. If they were~$3$-related, we would find~$g \in H_1 \cap H_2$ taking~$\alpha_3$ to~$\alpha_4$, so~$h_1 g^{-1} \in H_3$, contrary to assumption (b). 

The converse is similar.
\end{proof}

\subsection{Removing the transitivity assumption}

Most results on relational complexity in the literature deal with transitive group actions. Nontransitive actions can be mysterious, but we can at least collect a few easy facts.

\begin{lem}
Suppose $G$ acts on $X$, and suppose that $Y \subset X$ is stable under the $G$-action. Then $\RC(G,Y) \le \RC(G, X)$.
\end{lem}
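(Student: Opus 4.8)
The plan is to observe that the relation $\stb{r}$ of $r$-relatedness, as well as the notion of $n$-relatedness, is intrinsic to the $G$-action in the sense that it refers only to the existence of elements $g \in G$ with prescribed effect on coordinates; it does not depend on the ambient set in which those coordinates happen to lie. First I would record that, since $Y \subseteq X$, a tuple $I \in Y^n$ is in particular a tuple $I \in X^n$, and that $Y$ being $G$-stable is exactly what is needed for $G$ to act on $Y$, so that $\RC(G,Y)$ is defined. Then, given $I, J \in Y^n$, the defining condition for $I \stb{r} J$ — namely that for each choice $1 \le k_1 < \cdots < k_r \le n$ there is $g \in G$ with $I_{k_i}^g = J_{k_i}$ for all $i$ — is \emph{verbatim} the same whether $I$ and $J$ are viewed as elements of $Y^n$ or of $X^n$, because the witnessing elements $g$ are sought in $G$ in both cases. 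The same remark applies to $n$-relatedness, i.e.\ to the existence of $g \in G$ with $I^g = J$. In other words, the $r$-relatedness relation on $Y^n$ is precisely the restriction of the $r$-relatedness relation on $X^n$.

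Granting this, set $k = \RC(G, X)$. By definition $k \ge 2$, and for every $n \ge k$ any pair of $k$-related tuples in $X^n$ is $n$-related. Now fix $n \ge k$ and suppose $I, J \in Y^n$ satisfy $I \stb{k} J$ with respect to the action of $G$ on $Y$. By the first paragraph, $I \stb{k} J$ also holds with respect to the action of $G$ on $X$, hence $I$ and $J$ are $n$-related, i.e.\ there exists $g \in G$ with $I^g = J$ — which is exactly $n$-relatedness of $I$ and $J$ as tuples in $Y^n$. Thus the integer $k$ has the defining property required of $\RC(G, Y)$, and since $k \ge 2$ the constraint ``$\ge 2$'' in the definition of $\RC(G,Y)$ is automatically met; therefore $\RC(G, Y) \le k = \RC(G, X)$. (The degenerate cases $|Y| \le 2$ need no separate treatment, as then $\RC(G, Y) = 2 \le k$ by convention.)

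I expect no genuine obstacle here: the entire content is the observation that $r$-relatedness is an intrinsic feature of a $G$-action and hence transfers cleanly to a $G$-stable subset, on which it simply restricts. The only point demanding any care at all is the bookkeeping around the convention that relational complexity is always at least $2$, and this causes no difficulty precisely because $\RC(G, X) \ge 2$ already.
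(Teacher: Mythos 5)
Your proposal is correct and takes essentially the same route as the paper: both observe that $r$-relatedness of tuples in $Y^n$ coincides with their $r$-relatedness viewed in $X^n$, and then apply the definition of $\RC(G,X)$ directly. Your version merely spells out the intrinsic-ness of the relation and the $k \ge 2$ convention in more detail than the paper's three-line argument.
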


\begin{proof}
Let $k= \RC(G,X)$, and suppose that $I, J$ are $n$-tuples of elements of $Y$ which are $k$-related; then of course they are $n$-tuples of elements of $X$ which are $k$-related, so they are $n$-related. Hence the complexity on $Y$ is no greater than $k$.
\end{proof}

\begin{lem}
Let $X, Y$ be $G$-sets. Then 
\[ \RC(G, X \cop Y) = \RC(G, X \cop Y \cop Y) \, . \]

\end{lem}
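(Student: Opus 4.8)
The plan is to establish the two inequalities separately. One direction is immediate from the lemma just proved: $X \cop Y$ is a $G$-stable subset of $X \cop Y \cop Y$, so $\RC(G, X \cop Y) \le \RC(G, X \cop Y \cop Y)$. The content of the statement is therefore the reverse inequality $\RC(G, X \cop Y \cop Y) \le \RC(G, X \cop Y)$, and for this I would show directly that $k := \RC(G, X \cop Y)$ has the defining property of the relational complexity for the action on $X \cop Y \cop Y$.

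Write the three summands as $X$, $Y_1$, $Y_2$, and fix mutually inverse $G$-equivariant bijections $\sigma \colon Y_1 \to Y_2$ and $\sigma^{-1} \colon Y_2 \to Y_1$. Take $n \ge k$ and $n$-tuples $I, J$ of elements of $X \cop Y \cop Y$ which are $k$-related; the goal is to find $g \in G$ with $I^g = J$. First I would observe that $I$ and $J$ have the same ``location pattern'': since $k \ge 2$ and $n \ge k$, for each index $i$ one can choose a $k$-subset of indices containing $i$, giving some $g_i \in G$ with $I_i^{g_i} = J_i$, so $I_i$ and $J_i$ lie in the same $G$-orbit; as $X$, $Y_1$, $Y_2$ are disjoint unions of $G$-orbits, and an orbit inside $Y_1$ is, as a subset of the disjoint union, distinct from the corresponding orbit inside $Y_2$, it follows that $I_i$ and $J_i$ lie in the same summand. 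Now \emph{fold} $I$ to an $n$-tuple $\bar I$ of elements of $X \cop Y$ by leaving entries in $X$ or in $Y_1 = Y$ unchanged and applying $\sigma^{-1}$ to each entry lying in $Y_2$; define $\bar J$ from $J$ in the same way (legitimate, by the previous sentence). Because $\sigma^{-1}$ is $G$-equivariant, any $g \in G$ matching $I$ and $J$ on a given set of indices also matches $\bar I$ and $\bar J$ on that set, so $\bar I$ and $\bar J$ are $k$-related; by definition of $k$ they are $n$-related, say $\bar I^g = \bar J$. Reading the equivariance of $\sigma^{-1}$ backwards, and using that $\sigma^{-1}$ is injective, shows that this same $g$ satisfies $I^g = J$. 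Hence $k$ works for $X \cop Y \cop Y$, so $\RC(G, X \cop Y \cop Y) \le k$, and combined with the easy inequality this gives the claimed equality.

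The step I would be most careful about is the ``location pattern'' claim, since the folding map on tuples is only well defined once we know that corresponding entries of $I$ and $J$ sit in the same summand; this in turn rests on the summands being unions of orbits and on $Y_1$ and $Y_2$ contributing genuinely distinct orbits to the disjoint union. Everything past that point is a routine transport of structure along the equivariant bijection $\sigma$, and in particular no new idea is needed to handle the degenerate cases where $X$ or $Y$ is empty.
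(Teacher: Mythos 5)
Your proof is correct and follows essentially the same route as the paper's: the easy inequality comes from the restriction lemma, and the reverse inequality is obtained by folding the second copy of $Y$ onto the first via the canonical $G$-equivariant bijection, checking that $k$-relatedness is preserved in both directions. The ``location pattern'' step you flag is exactly the point the paper handles by invoking $1$-relatedness of $I$ and $J$, so nothing is missing.
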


Here and elsewhere we write $\coprod$ for the disjoint union.

\begin{proof}
We certainly have $\le$ by the previous lemma. Let $k= \RC(G, X \cop Y)$, and let $I, J$ be two $n$-tuples of elements of $\Omega =  X \cop Y \cop Y$ that are $k$-related. We show they are $n$-related.

For clarity, let $Y_0$ and $Y_1$ denote the two copies of $Y$ here, so $\Omega = X \cop Y_0 \cop Y_1$. The canonical bijection $Y_0 \longrightarrow Y_1$ will be written $y \mapsto y'$.

Re-arranging the indices if necessary, we may assume that $I$ is of the form $I=(x_1, \ldots, x_s, y_1', \ldots, y_t')$ for $x_i \in X \cop Y_0$, and $y_i' \in Y_1$. Since $I$ and $J$ are $1$-related, we can write $J= (z_1, \ldots, z_s, w_1', \ldots, w_t')$ with $z_i \in X \cop Y_0$ and $w_i' \in Y_1$. 

Now consider the tuples $I_0 = (x_1, \ldots, x_s, y_1, \ldots, y_t)$ and $J_0  = (z_1, \ldots, z_s$, $w_1, \ldots, w_t)$, whose entries are in $X \cop Y_0$. Inspection reveals that they are $k$-related (the bijection $y \mapsto y'$ is $G$-equivariant). So they must be $n$-related. It follows that $I$ and $J$ are $n$-related.
\end{proof}

So when studying the complexity of a non-transitive action, we only need one copy of each ``type'' of orbit. The lemma shows that repetitions do not affect the complexity.

\begin{lem}
Let $G$ act on $X \cop Y$ where the action on $X$ is free (semi-regular). Then 
\[ \RC(G, X \cop Y) = \RC(G, Y) \, . \]
\end{lem}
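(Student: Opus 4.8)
\emph{Proof plan.} The plan is to prove the two inequalities separately. The inequality $\RC(G,Y) \le \RC(G, X \cop Y)$ comes for free: $Y$ is a $G$-stable subset of $X \cop Y$, so this is a special case of the first lemma of this subsection. Hence the real task is $\RC(G, X \cop Y) \le \RC(G, Y)$. Write $k = \RC(G,Y)$ and $\Omega = X \cop Y$, and note that we may assume $X$ and $Y$ are both nonempty, since otherwise the claimed equality is trivial (with the usual convention for the empty action).

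I would then fix $n \ge k$ and two $k$-related tuples $I, J \in \Omega^n$, and produce $g \in G$ with $I^g = J$. After reindexing, write $I = (x_1, \dots, x_s, y_1, \dots, y_t)$ with $x_i \in X$ and $y_j \in Y$; since $k$-relatedness implies $1$-relatedness and $X, Y$ are $G$-stable, $J = (x_1', \dots, x_s', y_1', \dots, y_t')$ has the same shape. The key observation is that when $s \ge 1$, freeness of the action on $X$ provides a \emph{unique} element $g_1 \in G$ with $x_1^{g_1} = x_1'$; and for every other index $m$ (one exists since $n \ge k \ge 2$), applying $2$-relatedness to the index pair $\{1, m\}$ yields some $g$ with $x_1^g = x_1'$ — which forces $g = g_1$ by freeness — and $I_m^g = J_m$, so $I_m^{g_1} = J_m$. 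Thus $I^{g_1} = J$. When instead $s = 0$, the tuples $I, J$ lie over $Y$, are $k$-related there, and $n \ge k = \RC(G, Y)$ forces them to be $n$-related. In either case $\RC(G, X \cop Y) \le k$.

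I do not expect a serious obstacle. The one point to get right is the interaction between the two kinds of coordinates: a single coordinate lying in the free part $X$ rigidifies the relating element, so there is never any need to patch together local solutions coming from different index sets, and the degenerate all-$Y$ case then folds straight back into the definition of $\RC(G,Y)$ — provided one remembers that this definition is only invoked for $n \ge k$.
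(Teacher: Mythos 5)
Your proof is correct and follows essentially the same route as the paper's: both exploit the fact that a coordinate lying in the free part $X$ determines the relating group element uniquely, and then verify the remaining coordinates one at a time using $2$-relatedness, with the all-$Y$ case handled directly by the definition of $\RC(G,Y)$. The only cosmetic differences are that the paper first reduces to the regular case and normalizes so that the $X$-coordinates of the two tuples agree, whereas you work directly with the unique $g_1$ sending $x_1$ to $x_1'$; just note that the \emph{existence} of $g_1$ comes from $1$-relatedness rather than from freeness, which supplies only uniqueness (a point your subsequent appeal to $2$-relatedness does in fact cover).
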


\begin{proof}
From the previous lemma, or alternatively by an obvious induction, we may as well assume that the action on $X$ is regular, so it is in particular binary. 

We only need to show $\le$. Let $I, I'$ be $n$-tuples of elements of $X \cop Y$, say $I=(x_1, \ldots, x_s, y_1, \ldots y_t)$ and $I'=(x'_1, \ldots, x'_s, y'_1, \ldots y'_t)$ in obvious notation. Assume that $I, I'$ are $k$-related, where $k \ge 2$ is the complexity of $Y$. There is nothing to prove if $s=0$.

As the action on $X$ is binary, we may as well assume that $x_i' = x_i$ for all $i$. Further, suppose $g \in G$ is such that $(x_1, y_j)^g = (x_1, y_j')$. As the action is free, we must have $g=1$ and $y_i' = y_i$, so $I=I'$.
\end{proof}

Here is a similar statement. The easy proof will be omitted.

\begin{lem}
Let $G$ act on $X \cop Y$ where the action on $X$ is trivial. Then 
 \[ \RC(G, X \cop Y) = \RC(G, Y) \, . \]
\end{lem}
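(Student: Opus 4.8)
The plan is to establish the equality by proving the two inequalities separately. One of them is free: since $Y$ is a $G$-stable subset of $X \cop Y$, the restriction lemma proved above gives $\RC(G, Y) \le \RC(G, X \cop Y)$ immediately. So everything comes down to the reverse inequality $\RC(G, X \cop Y) \le \RC(G, Y)$, which I would prove by adapting the argument already used above for the semi-regular and free cases.

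Write $k = \RC(G, Y)$. I would take $n$-tuples $I, J$ of elements of $X \cop Y$ with $n \ge k$ that are $k$-related, and aim to produce a single $g \in G$ with $I^g = J$. After permuting coordinates, write $I = (x_1, \dots, x_s, y_1, \dots, y_t)$ with the $x_i \in X$ and the $y_i \in Y$; since $I$ and $J$ are in particular $1$-related and $X$, $Y$ are each $G$-stable, $J$ has the matching shape $J = (z_1, \dots, z_s, w_1, \dots, w_t)$ with $z_i \in X$, $w_i \in Y$. Because the $G$-action on $X$ is trivial, $1$-relatedness moreover forces $z_i = x_i$ for every $i$.

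The heart of the matter is to show that the truncations $I_0 = (y_1, \dots, y_t)$ and $J_0 = (w_1, \dots, w_t)$, viewed as tuples of elements of $Y$, are $\min(k, t)$-related, and then to feed this into the hypothesis $\RC(G, Y) = k$. If $t \ge k$ this is direct: any $k$ of the coordinates of $I_0$ and $J_0$ correspond to $k$ coordinates of $I$ and $J$, which $I \stb{k} J$ matches by a single element; hence $I_0 \stb{k} J_0$, and since $k = \RC(G, Y) \le t$ we conclude $I_0 \stb{t} J_0$, i.e.\ there is $g \in G$ with $y_i^g = w_i$ for all $i$. If instead $t < k$, I would pad the $t$ coordinates of the $y$-block with $k - t$ coordinates from the $x$-block --- there are enough of them since $s = n - t \ge k - t$ --- and apply $I \stb{k} J$ to this set of $k$ coordinates to obtain directly a $g$ with $y_i^g = w_i$ for all $i$. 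In either case this $g$ also satisfies $x_i^g = x_i = z_i$ because $g$ fixes $X$ pointwise, so $I^g = J$, proving that $I$ and $J$ are $n$-related and hence $\RC(G, X \cop Y) \le k$.

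Nothing here is deep; the only place calling for a moment's attention --- and the nearest thing to an obstacle --- is the case $t < k$, where one cannot literally invoke the defining property of $\RC(G, Y)$ (the relevant tuples being too short) and must instead extract the single element $g$ by borrowing coordinates from the trivial block $X$. The degenerate cases in which $X$ or $Y$ is empty are immediate, using only the convention that relational complexity is at least $2$.
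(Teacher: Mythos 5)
The paper gives no proof of this lemma --- it is dismissed as ``similar'' to the preceding one and ``easy'', so there is nothing to compare against line by line. Your argument is correct and complete (the reduction to the $Y$-block, the case split on $t \ge k$ versus $t < k$, and the padding of short $y$-blocks with coordinates from the pointwise-fixed set $X$ all check out), and it is exactly the natural adaptation of the preceding lemma's proof, with triviality of the action on $X$ playing the role that freeness played there.
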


We shall see later that the following corollary applies in particular to the alternating groups.

\begin{coro} \label{coro-nontransitive}
Let $G$ be a group whose only transitive, binary actions are the regular one and the trivial one. Then any binary action of $G$ is a disjoint union of copies of the regular action, and copies of the trivial action.
\end{coro}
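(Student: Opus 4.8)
The plan is to prove Corollary~\ref{coro-nontransitive} by induction on the number of non-trivial orbits of $G$ on $\Omega$, using the sequence of lemmas just established. Suppose $G$ acts on $\Omega$ with $\RC(G,\Omega)=2$ (the binary hypothesis). Decompose $\Omega$ into its $G$-orbits. I would like to conclude that each orbit is either trivial or regular; the corollary then follows since a disjoint union of trivial and regular orbits is exactly what is claimed.

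First I would handle the restriction: for each orbit $\mathcal{O}\subseteq\Omega$, the set $\mathcal{O}$ is $G$-stable, so by the first lemma of \S\ref{s: background} on restrictions, $\RC(G,\mathcal{O})\le \RC(G,\Omega)=2$, hence the action on $\mathcal{O}$ is itself binary (or trivial, if $|\mathcal{O}|\le 1$; recall the convention fixes the minimum at $2$). But $\mathcal{O}$ is a \emph{transitive} binary action of $G$, so by hypothesis it is either the regular action or the trivial action. This already shows that every orbit of $G$ on $\Omega$ is, as a $G$-set, isomorphic to the regular $G$-set or to a point.

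It remains to show that $\Omega$ is the disjoint union of these pieces in the relevant sense, i.e. that no subtlety arises from having several orbits of the same type. But this is immediate: $\Omega$ is literally the disjoint union $\coprod_{\mathcal{O}} \mathcal{O}$ of its orbits, and we have just identified each $\mathcal{O}$ as a copy of the regular action or a copy of the trivial action. No further combinatorics is needed, because the statement of the corollary only asserts that $\Omega$ \emph{is} such a disjoint union, not that the complexity is computed in any particular way from the pieces. (The intervening lemmas about $X\cop Y$, $X\cop Y\cop Y$, free summands and trivial summands are not logically required for the corollary as stated, though they explain why one may reduce to ``one copy of each type'' when actually computing complexities, and they confirm consistency: a disjoint union of trivial and regular actions is indeed binary.)

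The only point requiring the smallest amount of care — and the closest thing to an obstacle — is the boundary behaviour of the convention $\RC\ge 2$: a one-point orbit has ``complexity'' declared to be $2$ by fiat, and we must make sure that calling such an orbit ``trivial'' is consistent with the hypothesis, which speaks of ``the trivial one'' among transitive binary actions. Since the excerpt explicitly admits the trivial (one-point) action as a binary action, this is exactly the intended reading and there is no real difficulty; the restriction lemma applies verbatim once we agree that $\RC(G,\mathcal{O})\le 2$ forces $\mathcal{O}$ into the allowed list. Thus the proof is essentially a one-line application of the restriction lemma to each orbit, followed by invoking the hypothesis on transitive binary actions.
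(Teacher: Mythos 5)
Your proof is correct and is essentially the argument the paper intends (the corollary is stated without proof, immediately after the restriction lemma, which is exactly the tool you apply orbit-by-orbit). You are also right that the intervening lemmas on disjoint unions, free summands and trivial summands are not logically needed for the stated (one-directional) claim.
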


\subsection{Another criterion for binary actions}

We start with a lemma adapted from \cite{ghs_binary}. In the lemma $G$ acts on $\Omega$ with $\Lambda \subset \Omega$. We write $G_\Lambda$ for the setwise stabilizer of $\Lambda$, $G_{(\Lambda)}$ for the pointwise stabilizer of $\Lambda$ and $G^\Lambda=G_\Lambda/G_{(\Lambda)}$ for the permutation group induced by $G$ on $\Lambda$.

\begin{lem}
Suppose there is a subset $\Lambda \subset \Omega$ with more than 2 elements, and with the following properties: there is a permutation $\tau $ of $\Lambda$ with $\tau\not\in G^\Lambda$ and there are permutations $\eta_1, \eta_2, \ldots, \eta_r$ of $\Lambda$ such that:
\begin{enumerate}
\item $g_i := \tau \eta_i \in G^\Lambda$,
\item the support of $\tau$ and that of $\eta_i$ are disjoint, for all $i$, 
\item any $\lambda \in \Lambda$ is fixed by at least one $\eta_i$. 

\end{enumerate}
Then the action of $G$ on $\Omega$ is not binary.
\end{lem}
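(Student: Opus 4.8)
The plan is to reduce this statement to the basic criterion of Lemma~\ref{lem-criterion-2-not-implies-3}, by manufacturing out of the combinatorial data $(\tau, \eta_1, \ldots, \eta_r)$ three points of $\Omega$ with stabilizers $H_1, H_2, H_3$ and elements $h_i \in H_i$ exhibiting condition (2) there. The rough idea is that $\tau$, being a permutation of $\Lambda$ not induced by $G$, should play the role of the ``obstruction'': we want to find two tuples listing the points of $\Lambda$ that are $2$-related (indeed $(|\Lambda|-1)$-related, via the $\eta_i$) but that differ by $\tau$, hence are not $|\Lambda|$-related since $\tau \notin G^\Lambda$.

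First I would set up notation: enumerate $\Lambda = \{\lambda_1, \ldots, \lambda_m\}$ with $m \ge 3$, let $I = (\lambda_1, \ldots, \lambda_m)$ and $J = (\lambda_1^\tau, \ldots, \lambda_m^\tau) = I^\tau$. These are not $m$-related, because an element $g$ with $I^g = J$ would stabilize $\Lambda$ setwise and induce $\tau$ on it, contradicting $\tau \notin G^\Lambda$. The heart of the matter is to show $I \stb{k} J$ for every $k < m$, or at least $I \stb{2} J$ together with a height/complexity argument; from the structure of the hypotheses the natural claim is actually $I \stb{m-1} J$. Fix indices $k_1 < \cdots < k_{m-1}$, i.e.\ omit one index, say we omit the position of $\lambda$. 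By hypothesis (3) there is some $\eta_i$ fixing $\lambda$, and then $g_i = \tau\eta_i \in G^\Lambda$ agrees with $\tau$ on $\mathrm{supp}(\tau)$ and with $\eta_i$ elsewhere; since $\mathrm{supp}(\tau)$ and $\mathrm{supp}(\eta_i)$ are disjoint (hypothesis (2)), on every point of $\Lambda$ except those moved by $\eta_i$ the element $g_i$ does what $\tau$ does. The only points where $g_i$ and $\tau$ disagree lie in $\mathrm{supp}(\eta_i)$, and — this is the step that needs care — one checks that $\lambda$ is the unique point of $\Lambda$ we are allowed to ignore, so we need $\mathrm{supp}(\eta_i) \subseteq \{\lambda\} \cup (\text{stuff that doesn't matter})$; more precisely, because $\mathrm{supp}(\tau)$ and $\mathrm{supp}(\eta_i)$ are disjoint, a point in $\mathrm{supp}(\eta_i)$ is fixed by $\tau$, so $g_i$ sends it to an $\eta_i$-image rather than to itself. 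So the cleaner route: pick the omitted index to be one where the discrepancy sits, using (3) to guarantee that for the chosen sub-$(m-1)$-tuple there is an $\eta_i$ fixing all of its entries. Lifting $g_i \in G^\Lambda$ to an honest element of $G_\Lambda \le G$ gives the required group element, establishing $I \stb{m-1} J$, hence $I \stb{2} J$.

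Then I would invoke Lemma~\ref{lem-criterion-2-not-implies-3}: having produced $I, J \in \Omega^m$ with $I \stb{2} J$ but $I \nstb{m} J$, a standard pigeonhole over tuple length (or directly extracting a failure at length $3$) yields the existence of some $I_0, J_0 \in \Omega^3$ with $I_0 \stb{2} J_0$ and $I_0 \nstb{3} J_0$, which is condition (1) of that lemma, and therefore the action is not binary. The main obstacle I anticipate is the bookkeeping in the previous paragraph: making sure that for an arbitrary choice of $m-1$ of the $m$ positions there is a single $\eta_i$ that fixes all the corresponding points and whose support-disjointness with $\tau$ forces $g_i$ to equal $\tau$ on exactly those positions — i.e.\ checking that the discrepancy between $g_i$ and $\tau$ always lands precisely on the one omitted coordinate. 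The hypotheses (1)--(3) are clearly tailored to make this work, but turning ``$g_i = \tau$ except on $\mathrm{supp}(\eta_i)$'' into the statement ``$I$ and $J$ agree on all but one coordinate, and that coordinate can be taken to be the omitted one'' requires carefully tracking which point of $\Lambda$ gets moved where, and is the one place where a sloppy argument would break down.
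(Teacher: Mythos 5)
Your setup coincides with the paper's: take $I=(\lambda_1,\dots,\lambda_m)$ and $J=I^\tau$, observe that $\tau\not\in G^\Lambda$ rules out $m$-relatedness, and then try to establish enough relatedness of $I$ and $J$ to contradict binariness. But the target you aim for, $I\stb{m-1}J$, is the wrong one, and your argument for it does not close. For a fixed choice of $m-1$ positions you would need a \emph{single} element of $G^\Lambda$ agreeing with $\tau$ on all $m-1$ corresponding points; the only elements the hypotheses hand you are the $g_i=\tau\eta_i$, and $g_i$ disagrees with $\tau$ on \emph{all} of $\operatorname{supp}(\eta_i)$ (since $\tau$ fixes that set pointwise), a set with at least two elements whenever $\eta_i\neq 1$. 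Hypothesis (3) guarantees that each single point is fixed by \emph{some} $\eta_i$; it does not guarantee that any $m-1$ prescribed points are simultaneously fixed by one $\eta_i$, so the discrepancy between $g_i$ and $\tau$ cannot be confined to the one omitted coordinate. You flag exactly this difficulty yourself but do not resolve it, and ``picking the omitted index where the discrepancy sits'' is not available, since $(m-1)$-relatedness quantifies over all choices of omitted index.

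What the hypotheses actually support --- and what the paper proves --- is only $I\stb{2}J$, via a case analysis on a pair $(\lambda,\mu)$: if both points are fixed by $\tau$, the identity of $G$ works; if both are moved by $\tau$, they and their images lie in $\operatorname{supp}(\tau)$, hence are fixed by every $\eta_i$, and any $g_i$ works; if exactly one, say $\lambda$, is fixed by $\tau$, choose by (3) an $\eta_i$ fixing $\lambda$, and note that $\mu$ and $\mu^\tau$, lying in $\operatorname{supp}(\tau)$, are fixed by $\eta_i$, so $g_i$ agrees with $\tau$ on both points. The key observation you are missing is that for a \emph{pair} at most one point needs to be specially fixed by the chosen $\eta_i$, and (3) provides exactly that; for triples and longer subtuples the argument genuinely breaks. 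Your final step is also off: once $I\stb{2}J$ but $I\nstb{m}J$ with $m>2$, the action fails to be binary directly from the definition of relational complexity, so there is no need to extract $3$-tuples at all --- and the ``pigeonhole'' you invoke is not valid in general, since a non-binary action need not admit $3$-tuples that are $2$-related but not $3$-related; condition (1) of Lemma~\ref{lem-criterion-2-not-implies-3} is sufficient for non-binariness, not equivalent to it.
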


\begin{proof}
Let $\lambda_1, \ldots, \lambda_t$ be the elements of $\Lambda$, and consider the tuples $I= (\lambda_1, \ldots, \lambda_t)$ and $J=(\lambda_1^\tau, \ldots, \lambda_t^\tau)$. The assumption that $\tau\not\in G^\Lambda$ means that there is no element of $G$ taking $I$ to $J$, and so it is enough to show that these are $2$-related.

Without loss of generality, let us look at the first two entries in the tuples. There are 4 cases. If $\lambda_1^\tau = \lambda_1$ and also $\lambda_2^\tau = \lambda_2$, the identity takes $(\lambda_1, \lambda_2)$ to $(\lambda_1^\tau, \lambda_2^\tau )$. If $\lambda_1^\tau = \lambda_1$ but $\lambda_2^\tau \ne \lambda_2$, then $\lambda_2$ and $\lambda_2^\tau$ are in the support of $\tau$, so they are not in the support of $\eta_j$ for all $j$. Now choose $\eta_i$ fixing $\lambda_1$. Thus $\eta_i$ takes $(\lambda_1^\tau, \lambda_2^\tau)$ to itself, and the element $g_i=\tau \eta_i \in G^\Lambda$ takes $(\lambda_1, \lambda_2)$ to $(\lambda_1^\tau, \lambda_2^\tau)$. There is another symmetric case.

Finally, suppose $\lambda_1^\tau \ne \lambda_1$ and $\lambda_2^\tau \ne \lambda_2$. All four points mentioned are thus in the support of $\tau$, so not in the support of any $\eta_i$, or in other words they are fixed by any $\eta_i$. Again $g_i = \tau \eta_i$ does the job.
\end{proof}

In the coming example, and in the rest of the paper, we use the notion $\Fix(g)$ for the set of fixed points of $g \in G$ in some action which is implicit from the context.

\begin{ex} \label{e: inter}
Again let $G$ act on $\Omega$. Suppose $g$ and $h$ are two elements of $G$ which commute, and let 
\[ \Lambda = \Fix(g) \cup \Fix(h) \cup \Fix(gh^{-1}) \, . \]
(The union might not be disjoint.) Then $\Lambda$ is stable under the actions of $g$ and $h$, because these two commute.

By construction $g$ and $h$ induce the same permutation $\tau$ on $\Fix(gh^{-1})$ ; extend it to be the identity outside of $\Fix(gh^{-1})$, so we can look at it as a permutation of $\Lambda$. Next, let $\eta_1$ be the permutation induced by $g$ on $\Fix(h)$, again viewed as a permutation of $\Lambda$, and symmetrically let $\eta_2$ be the permutation induced by $h$ on $\Fix(g)$, viewed as a permutation of $\Lambda$.

Let us check whether the conditions of the lemma are met. First, the supports of $\tau$ and $\eta_i$ are certainly disjoint: by definition $\eta_1$ only moves points that are fixed by $h$ and so also by $\tau$ if these points are in $\Fix(gh^{-1})$, which is where $\tau$ could be nontrivial. Similarly for $\eta_2$.

Next, the permutation induced on $\Lambda$ by $g$ is $\tau \eta_1$, so $\tau \eta_1 \in G^\Lambda$, and similarly for $\tau \eta_2$. We have conditions (1) and (2).

As for condition (3), take $\lambda \in \Lambda$. If $\lambda$ lies neither in $\Fix(g)$ nor $\Fix(h)$, then it is fixed by both $\eta_1$ and $\eta_2$, by their definition. If $\lambda \in \Fix(g)$, then either $\lambda \in \Fix(gh^{-1})$ (in which case it is fixed by $h$ as well, so by everything), or $\lambda \not\in \Fix(gh^{-1})$, so it is fixed by $\tau$ by definition, and so, being fixed by $h = \tau \eta_2$, it must be fixed by $\eta_2$.

We have the three conditions of the lemma. The conclusion is that {\em if we can only prove that $\tau \not\in G^\Lambda$, and that $\Lambda$ has more than two elements, then the action will not be binary}. 
\end{ex}

The integer $|\Fix(g)|$ will be called the {\em fixity} of $g \in G$. For a prime $p$ we say that $g$, an element of order $p$, has {\em maximal $p$-fixity} when no element of $G$ of order $p$ fixes more points than $g$ in the action under scrutiny. 
 Also, the next statement uses the notation $\Fix(K)$ for the set of common fixed points of all the elements of the subgroup $K$ of $G$.

\begin{lem} \label{l: inter}
Suppose that $G$ acts on $\Omega$, that $p$ is a prime, and that $g$ and $h$ are commuting $p$-elements of $G$ with the property that $\langle g\rangle$, $\langle h\rangle$ and $\langle gh^{-1}\rangle$ are conjugate subgroups of $G$. We write $K=\langle g, h\rangle$. Suppose that
\begin{enumerate}
 \item $|\Fix(K)|<|\Fix(g)|$;
 \item $g$ has maximal $p$-fixity.
\end{enumerate}
Then the action of $G$ on $\Omega$ is not binary.
\end{lem}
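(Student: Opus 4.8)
The plan is to apply Example~\ref{e: inter} with $\Lambda = \Fix(g)\cup\Fix(h)\cup\Fix(gh^{-1})$: according to that example, once $|\Lambda|>2$ is known it suffices to check that $\tau\notin G^\Lambda$, where $\tau$ is the common permutation induced on $\Fix(gh^{-1})$ by $g$ and by $h$ (extended by the identity on $\Lambda$).

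First I would collect elementary facts. Since $\langle g\rangle$, $\langle h\rangle$ and $\langle gh^{-1}\rangle$ are conjugate to the order-$p$ group $\langle g\rangle$, each of $g$, $h$, $gh^{-1}$ has order exactly $p$ and $|\Fix(g)|=|\Fix(h)|=|\Fix(gh^{-1})|=:f$. Moreover any two of the three sets $\Fix(g),\Fix(h),\Fix(gh^{-1})$ — and all three together — intersect in exactly $\Fix(K)$: for instance a point fixed by $g$ and by $gh^{-1}$ is fixed by $h=(gh^{-1})^{-1}g$, hence by $K=\langle g,h\rangle$, while conversely $\Fix(K)$ is contained in each. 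Writing $c=|\Fix(K)|$, inclusion–exclusion gives $|\Lambda|=3f-2c=f+2(f-c)$; hypothesis (1) says $c<f$, which forces $f\ge 1$, so $|\Lambda|\ge 3>2$.

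The substance of the argument is showing $\tau\notin G^\Lambda$, which I would do by contradiction. Suppose $x\in G_\Lambda$ induces $\tau$ on $\Lambda$. The support of $\tau$ is $\Fix(gh^{-1})\setminus\Fix(g)$, which is disjoint from $\Fix(g)$, so $\tau$ — and hence $x$ — fixes $\Fix(g)$ pointwise; and on $\Fix(gh^{-1})$ both $x$ and $g$ act as $\tau$. Therefore $x^{-1}g$ fixes $\Fix(g)\cup\Fix(gh^{-1})$ pointwise, a set of size $2f-c>f$. On the other hand, by Example~\ref{e: inter} the element $g$ induces $\tau\eta_1$ on $\Lambda$, where $\eta_1$ is the permutation of $\Fix(h)$ induced by $g$; since $|\Fix(h)|=f>c=|\Fix(g)\cap\Fix(h)|$ we have $\Fix(h)\not\subseteq\Fix(g)$, so $\eta_1\ne\mathrm{id}$, and $\eta_1$ has order $p$. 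Consequently $x^{-1}g\in G_\Lambda$ induces $\eta_1$ on $\Lambda$, so its image in $G^\Lambda$ has order $p$ and $p\mid|x^{-1}g|$. A suitable power $z$ of $x^{-1}g$ then has order exactly $p$, and $\Fix(z)\supseteq\Fix(x^{-1}g)\supseteq\Fix(g)\cup\Fix(gh^{-1})$ has size $>f=|\Fix(g)|$, contradicting hypothesis (2) that $g$ has maximal $p$-fixity. Hence $\tau\notin G^\Lambda$, and with $|\Lambda|>2$ Example~\ref{e: inter} gives that the action is not binary.

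The one delicate point — and the only real obstacle — is that the element $x$ afforded by ``$\tau\in G^\Lambda$'' is an arbitrary element of $G$, with no control on its order, so one cannot simply declare $x^{-1}g$ to be a $p$-element. The device that rescues the argument is the observation that $x^{-1}g$ \emph{already} acts on $\Lambda$ with order $p$ (as $\eta_1$), which forces $p$ into its order and lets us pass to an honest order-$p$ element without losing the oversized fixed-point set; everything else is bookkeeping with fixed-point sets.
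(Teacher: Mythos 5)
Your proof is correct and follows essentially the same route as the paper's: both set $\Lambda=\Fix(g)\cup\Fix(h)\cup\Fix(gh^{-1})$, verify $|\Lambda|=3f-2c>2$, and rule out $\tau\in G^\Lambda$ by producing an element of order $p$ with more than $f$ fixed points, contradicting maximal $p$-fixity. The only (immaterial) difference is that the paper argues directly with $x$ — which fixes $\Fix(g)\cup\Fix(h)$ pointwise and has order divisible by $p$ because $\tau$ itself already has order $p$ — whereas you pass to $x^{-1}g$ and use $\eta_1$; both rely on the same pass-to-a-power trick.
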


\noindent Note that, in this situation, any nontrivial power of either $g$, $h$ or $gh^{-1}$ has maximal $p$-fixity. Note too that the first supposition implies that $g\neq h$.

\begin{proof}
As in Example \ref{e: inter}, we introduce $A = \Fix(g)$, $B=\Fix(h)$ and $C = \Fix(gh^{-1})$, and we put
\[ \Lambda = A \cup B \cup C \, . \]
Here we have $A \cap B = A \cap C = B \cap C = A \cap B \cap C  = \Fix(K)$.

We use the fact that $\langle g\rangle$, $\langle h\rangle$ and $\langle gh^{-1}\rangle$ are conjugate subgroups of $G$. Thus, there is an integer $r$ such that 
\[
 |\Fix(g)|=|\Fix(h)|=|\Fix(gh^{-1})|=r.
\]
Put $r'= |\Fix(K)|$, so $r' < r$ by hypothesis. We deduce from the calculations above that the size of $\Lambda$ is $3r - 3r' + r' = 3(r - r') + r' > 2$.

Now let $\tau, \eta_1$ and $\eta_2$ be as in Example \ref{e: inter}, and let us prove that the permutation $\tau$ is not induced by an element $x \in G$ ; this will suffice. Assume for a contradiction that $x$ exists. As $\tau$ is not the identity ($\Fix(gh^{-1})$ being nonempty and not equal to $\Fix(g)$), we see that the order of $x$ is divisible by $p$. Thus some power of $x$, say $s \in G$, is a $p$-element and fixes at least as many points as $x$. By definition, $\tau$ is the identity on $\Lambda - C$, and it is also the identity on $\Fix(K)$, so in the end we see that $\tau$ fixes all the elements of $A \cup B$, as does $x$. However, the size of $A \cup B$ is $r + (r - r') > r$, which means that $s$ is a $p$-element of $G$ that fixes more elements of $\Omega$ than $g$. This is a contradiction and we are done.
\end{proof}

\begin{coro} \label{c: inter}
Suppose that $G$ acts on the set of cosets of a subgroup $H$, that $p$ is a prime, and that there exists $K$ an elementary-abelian subgroup of $G$ of order $p^2$ satisfying the following properties:
\begin{enumerate}
 \item $K=\langle g, h\rangle$ for some $p$-elements of maximal $p$-fixity, $g,h\in G$;
 \item $\langle g\rangle$, $\langle h\rangle$ and $\langle gh^{-1}\rangle$ are conjugate subgroups of $G$;
 \item $K\cap H=\langle g\rangle$.
 \end{enumerate}
 Then the action of $G$ on the cosets of $H$ is not binary.
\end{coro}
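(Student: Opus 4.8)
The plan is to reduce everything to Lemma~\ref{l: inter}. First I would observe that almost all of the hypotheses of that lemma are handed to us directly: $g$ and $h$ commute and are $p$-elements because they lie in the elementary-abelian $p$-group $K$; the subgroups $\langle g\rangle,\langle h\rangle,\langle gh^{-1}\rangle$ are conjugate by hypothesis~(2); $K=\langle g,h\rangle$ by hypothesis~(1); and $g$ has maximal $p$-fixity, again by hypothesis~(1). So the entire content of the proof is to verify condition~(1) of Lemma~\ref{l: inter}, namely that $|\Fix(K)|<|\Fix(g)|$ for the action of $G$ on the cosets of $H$.

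To do this I would first recall the description of fixed points in a coset action: the coset $Hx$ is fixed by $y\in G$ if and only if $xyx^{-1}\in H$, and is fixed by a subgroup $L\le G$ if and only if $xLx^{-1}\subseteq H$. In particular $\Fix(K)\subseteq\Fix(g)$ automatically, since $g\in K$, so it is enough to produce a single coset lying in $\Fix(g)$ but not in $\Fix(K)$. The trivial coset $H$ will do: it is fixed by $g$ since $g\in K\cap H\subseteq H$ by hypothesis~(3), but it is not fixed by $K$, for otherwise $K\subseteq H$ and hence $K\cap H=K$, contradicting $K\cap H=\langle g\rangle$ (a subgroup of order $p$, whereas $|K|=p^2$). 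This gives the strict inequality $|\Fix(K)|<|\Fix(g)|$, and then Lemma~\ref{l: inter} immediately yields that the action of $G$ on the cosets of $H$ is not binary.

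I do not expect any real obstacle here; this corollary is essentially a translation of Lemma~\ref{l: inter} into the language of coset actions. The only place calling for a little care is getting the dictionary between fixed points and the conjugacy condition $xLx^{-1}\subseteq H$ right, together with the remark that $\Fix(K)\subseteq\Fix(g)$ holds for free --- so that exhibiting one coset (the trivial one, supplied by hypothesis~(3)) in $\Fix(g)\setminus\Fix(K)$ suffices and we never have to count fixed points explicitly.
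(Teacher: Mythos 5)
Your proposal is correct and is essentially the paper's own proof: both reduce to Lemma~\ref{l: inter} and verify $|\Fix(K)|<|\Fix(g)|$ by observing that the trivial coset $H$ lies in $\Fix(g)$ (by hypothesis~(3)) but not in $\Fix(K)$, while $\Fix(K)\subseteq\Fix(g)$ holds automatically. Your write-up just spells out the coset-action dictionary a little more explicitly than the paper does.
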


\begin{proof}
This is just a reformulation of the lemma, with $\Omega = (G:H)$. Indeed, if $\alpha$ denotes $H$ as an element of $(G:H)$, then the stabilizer of $\alpha$ is $H$. Thus, we see that $\alpha \not \in \Fix(K)$ but, since $K\cap H=\langle g\rangle$, we certainly have $\alpha\in \Fix(g)$. As $\Fix(g) \ne \Fix(K)$, the lemma applies.
\end{proof}

% Note that, in the special case where $p=2$, we can replace the third condition with $|K\cap H|=2$.

\subsection{The graphs on conjugacy classes}

Let us recast the above criteria in terms of certain graphs and their connected components. Recall that the {\em rational conjugacy class} of $g \in G$ is the set of $h \in G$ such that $\langle g \rangle$ and $\langle h \rangle$ are conjugate subgroups of $G$. If $\CC^{rat}$ is a rational conjugacy class whose elements have order $p$ (a prime), we define the graph $\Gamma(\CC^{rat})$ whose set of vertices is $\CC^{rat}$, and with an edge between $x$ and $y$ when they commute and $xy^{-1} \in \CC^{rat}$. 

% Let $G$ be a group and let $\CC$ be a conjugacy class of $p$-elements in $G$, where $p$ is a prime. 

% \begin{enumerate}
%  \item vertices are elements of $\CC$;
%  \item we join $x,y\in \CC$ with an edge provided (i) $x$ and $y$ commute and (ii) $xy^{-1}\in\CC$. 
% \end{enumerate}

Note that if $p=2$, a rational conjugacy class is just a conjugacy class, and moreover, two elements $x,y\in \CC^{rat}$ are joined by an edge in $\Gamma(\CC^{rat})$ if and only if $xy\in\CC^{rat}$.

Let us see why this graph is useful in the study of binary actions. 

\begin{lem}\label{l: graph 2}
 Let $G$ act on the set of cosets of a subgroup $H$ and assume that $p$ is a prime dividing $|H|$. Let $\CC^{rat}$ be a rational conjugacy class of $p$-elements of maximal $p$-fixity and let $X=\CC^{rat}\cap H$. If the action of $G$ is binary, then $X$ and $\CC^{rat}\setminus X$ are not connected to each other in the graph $\Gamma(\CC^{rat})$.
\end{lem}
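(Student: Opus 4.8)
The plan is to argue by contrapositive: assuming $X$ and $\CC^{rat}\setminus X$ are connected to each other in $\Gamma(\CC^{rat})$, I would produce a configuration matching the hypotheses of Corollary~\ref{c: inter}, hence concluding that the action is not binary. Since $X$ and its complement are joined, there is an edge of $\Gamma(\CC^{rat})$ with one endpoint $g$ in $X$ and the other endpoint, call it $h$, in $\CC^{rat}\setminus X$. By the definition of the graph, $g$ and $h$ commute and $gh^{-1}\in\CC^{rat}$; in particular $\langle g\rangle$, $\langle h\rangle$ and $\langle gh^{-1}\rangle$ are all conjugate subgroups of $G$, and $g,h$ (being members of $\CC^{rat}$, a rational class of $p$-elements of maximal $p$-fixity) are $p$-elements of maximal $p$-fixity. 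So conditions (1) and (2) of Corollary~\ref{c: inter} are met with $K=\langle g,h\rangle$, except that I still need to know $K$ is elementary abelian of order $p^2$ and that $K\cap H=\langle g\rangle$.

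For the structure of $K$: $g$ and $h$ commute, so $K=\langle g,h\rangle$ is abelian and generated by two elements of order $p$, hence elementary abelian of order $p$ or $p^2$. If it had order $p$, then $\langle g\rangle=\langle h\rangle$, so $h\in\langle g\rangle\subseteq H$ (since $g\in X\subseteq H$ forces $\langle g\rangle\le H$), contradicting $h\notin X=\CC^{rat}\cap H$. Hence $|K|=p^2$. For the intersection with $H$: we have $g\in X\subseteq K\cap H$, so $\langle g\rangle\le K\cap H$. The subgroup $K\cap H$ is a subgroup of $K\cong (\z/p)^2$, so it is $\langle g\rangle$, all of $K$, or some other order-$p$ subgroup; but if $K\le H$ then $h\in H\cap\CC^{rat}=X$, a contradiction, and if $K\cap H$ were a different line it could not contain $\langle g\rangle$. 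Therefore $K\cap H=\langle g\rangle$, giving condition (3).

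With all three hypotheses of Corollary~\ref{c: inter} verified, that corollary yields that the action of $G$ on $(G:H)$ is not binary, which is exactly the contrapositive of the claim. The only mildly delicate point — and the one I would expect to be the main obstacle — is making sure the edge can be chosen with its two endpoints on opposite sides of the partition $\{X,\CC^{rat}\setminus X\}$: this is immediate from the meaning of "$X$ and $\CC^{rat}\setminus X$ are connected to each other" (a path between the two sets must cross an edge between them), but it is worth spelling out that such a crossing edge exists and that we are free to name its endpoints $g\in X$ and $h\notin X$. Everything else is a routine bookkeeping of orders of subgroups of $(\z/p)^2$, together with the observation that $g\in X$ already forces $\langle g\rangle\le H$ because $X=\CC^{rat}\cap H$.
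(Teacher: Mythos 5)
Your proposal is correct and follows essentially the same route as the paper: argue the contrapositive, take an edge of $\Gamma(\CC^{rat})$ crossing the partition, set $K=\langle g,h\rangle$, and invoke Corollary~\ref{c: inter}. The paper states the verification of the corollary's hypotheses in one line, whereas you spell out why $K$ is elementary abelian of order $p^2$ and why $K\cap H=\langle g\rangle$; these details are accurate and the two arguments are the same in substance.
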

\begin{proof}
 We prove the contrapositive: suppose $g\in X$, $h\in\CC^{rat}\setminus X$ and $g$ and $h$ are connected by an edge in $\Gamma(\CC^{rat})$. Then $K=\langle g,h\rangle$ is an elementary-abelian $p$-subgroup of $G$ satisfying all of the suppositions of Corollary~\ref{c: inter}. We conclude that the action of $G$ is not binary.
\end{proof}

With the next formulation, we complete the switch to graph-theoretical language. The proposition is quite general, and the rest of the paper relies only on a special case described below.

\begin{prop} \label{prop-connected-comp}
Let $G$ act on the set of cosets of a subgroup $H$, and assume that the action is binary. Let $p$ be a prime dividing $|H|$, and let $\CC^{rat}$ be a rational conjugacy class of $p$-elements of $G$ of maximal $p$-fixity. Finally, suppose that $\Gamma$ is any subgraph of $\Gamma(\CC^{rat})$.

Then for any $g \in  H$ which is a vertex of $\Gamma$, the connected component of $\Gamma$ containing $g$ lies entirely in $H$. 
\end{prop}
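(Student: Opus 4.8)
The plan is to deduce this from Lemma~\ref{l: graph 2} by a straightforward connectivity argument, together with the observation that the hypotheses of that lemma are inherited by any subgraph $\Gamma$ of $\Gamma(\CC^{rat})$. Write $X = \CC^{rat} \cap H$ as in Lemma~\ref{l: graph 2}. Since the action of $G$ on $(G:H)$ is binary by assumption, that lemma tells us that in the ambient graph $\Gamma(\CC^{rat})$ there is no edge joining a vertex of $X$ to a vertex of $\CC^{rat}\setminus X$; equivalently, $X$ is a union of connected components of $\Gamma(\CC^{rat})$.

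Now let $\Gamma$ be an arbitrary subgraph of $\Gamma(\CC^{rat})$ and let $g\in H$ be a vertex of $\Gamma$. In particular $g\in \CC^{rat}$ (all vertices of $\Gamma$ are vertices of $\Gamma(\CC^{rat})$, hence elements of $\CC^{rat}$), so $g\in X$. Let $g'$ be any vertex in the connected component of $\Gamma$ containing $g$; I would argue by induction on the length of a path from $g$ to $g'$ inside $\Gamma$ that $g'\in H$. For the inductive step it suffices to treat the case where $g$ and $g'$ are joined by a single edge of $\Gamma$. But every edge of $\Gamma$ is in particular an edge of $\Gamma(\CC^{rat})$, so $g$ and $g'$ are adjacent in $\Gamma(\CC^{rat})$; since $g\in X$ and $X$ is a union of connected components of $\Gamma(\CC^{rat})$, we conclude $g'\in X\subseteq H$. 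This completes the induction, and hence the entire connected component of $\Gamma$ through $g$ lies in $H$.

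The only mildly delicate point — and the one I would state carefully rather than the "main obstacle", since there really is no serious obstacle here — is making sure the logical direction is right: Lemma~\ref{l: graph 2} is phrased as "if the action is binary then $X$ and $\CC^{rat}\setminus X$ are not connected to each other", and "not connected to each other" must be read as "no edge between them", which is exactly what is needed above (it is proved there via its contrapositive through Corollary~\ref{c: inter}). One should also note that passing to a subgraph $\Gamma$ can only remove edges, never add them, so connectivity in $\Gamma$ is a refinement of connectivity in $\Gamma(\CC^{rat})$; thus a connected component of $\Gamma$ is contained in a connected component of $\Gamma(\CC^{rat})$, and the component of $g$ in $\Gamma(\CC^{rat})$ already lies in $X\subseteq H$. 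Either phrasing — the edge-by-edge induction or the "$\Gamma$-component sits inside a $\Gamma(\CC^{rat})$-component" remark — gives the result immediately.
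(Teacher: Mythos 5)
Your argument is correct and is exactly the paper's intended proof: the paper simply says the proposition is ``immediate from'' Lemma~\ref{l: graph 2}, and you have spelled out that deduction (no edge joins $X=\CC^{rat}\cap H$ to its complement, so the $\Gamma$-component of $g$, being contained in a $\Gamma(\CC^{rat})$-component, stays inside $X\subseteq H$). No difference in approach, just more detail.
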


\begin{proof}
% %Pick $g \in \CC \cap H$, which exists by assumption. 
% By the last lemma, all the vertices in the connected component of $\Gamma(\CC)$ containing $g$ must belong to $H$.
Immediate from the last lemma.
\end{proof}

From now on, we shall always work with the following subgraph $\Gamma$, which is reasonably easy to describe in concrete cases. Let $\CC$ be a conjugacy class of $p$-elements in $G$; it is contained in a unique rational conjugacy class $\CC^{rat}$. We define $\Gamma(\CC)$ to be the graph whose vertices are the elements of $\CC$, and with an edge between $x, y \in \CC$ if and only if \begin{enumerate}
\item $x$ and $y$ commute, 
\item either $xy^{-1} \in \CC$ or $yx^{-1} \in \CC$.
\end{enumerate}
In this way $\Gamma(\CC)$ is a subgraph of $\Gamma(\CC^{rat})$, and the proposition applies. (For $p=2$ there is no difference between $\Gamma(\CC)$ and $\Gamma(\CC^{rat})$.)

%\begin{ex}
%While studying the group $\PSU_3(q)$, we shall encounter an example where $\Gamma(\CC)$ is isomorphic to the set of nonzero cubes in $\f_q$ (where $q$ is even in this instance), with an edge between $x$ and $y$ if and only if $x+y$ is also a cube: see Lemma \ref{lem-cubes}. A little later, as we turn our attention to $\PSL_2(q)$, we shall be in a situation where a certain subgraph of $\Gamma(\CC)$ is similarly defined with squares instead of cubes: see Lemma \ref{l: p elements}. It is easy to produce more examples of this kind, close cousins of the familiar Payley graphs, by considering a semidirect product $G= \f_q \rtimes \f_q^\times$, with the action of $\f_q^\times$ of your choice.
%\end{ex}

Some vocabulary in order to state a corollary. When $g \in \CC$, the {\em component group of $g$ in $\Gamma(\CC)$} is the subgroup of $G$ generated by all the elements in the connected component of $\Gamma(\CC)$ containing $g$. The {\em component groups of $\Gamma(\CC)$} are the various groups thus obtained by varying $g$ ; they are all conjugate in $G$. The next corollary asserts that, in the case of transitive, binary actions, each stabilizer must contain a component group. 

\begin{coro} \label{coro-connected-comp}
Let $G$ act on the set of cosets of a subgroup $H$, and assume that the action is binary. Let $p$ be a prime dividing $|H|$, and let $\CC$ be a conjugacy class of $p$-elements of $G$ of maximal $p$-fixity. Then for any $g \in \CC \cap H$, the component group of $g$ in $\Gamma(\CC)$ is contained in $H$.
\end{coro}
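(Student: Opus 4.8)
The plan is to obtain this as an immediate consequence of Proposition~\ref{prop-connected-comp}, applied to the specific subgraph $\Gamma = \Gamma(\CC)$ introduced just before the corollary.

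First I would record why the hypotheses of that proposition are available. The conjugacy class $\CC$ is contained in a unique rational conjugacy class $\CC^{rat}$, and every element of $\CC^{rat}$ has order $p$, since it generates a subgroup conjugate to $\langle g\rangle$ for $g\in\CC$. Moreover fixity is constant along $\CC^{rat}$: conjugate elements have the same fixity, and if $\langle x\rangle=\langle y\rangle$ then $\Fix(x)=\Fix(y)$ because each of $x,y$ is a power of the other. Hence every element of $\CC^{rat}$ has the same fixity as $g$, which by assumption is maximal among $p$-elements of $G$. Thus $\CC^{rat}$ is a rational conjugacy class of $p$-elements of $G$ of maximal $p$-fixity, and $\Gamma(\CC)$ is a subgraph of $\Gamma(\CC^{rat})$, so Proposition~\ref{prop-connected-comp} applies with this $\CC^{rat}$ and with $\Gamma=\Gamma(\CC)$.

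Now fix $g\in\CC\cap H$. Then $g$ lies in $H$ and is a vertex of $\Gamma=\Gamma(\CC)$, so the proposition tells us that the connected component of $\Gamma(\CC)$ containing $g$ has all of its vertices in $H$. By definition the component group of $g$ in $\Gamma(\CC)$ is the subgroup of $G$ generated precisely by these vertices; since $H$ is a subgroup of $G$, it contains this generated subgroup. This is exactly the assertion of the corollary.

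There is essentially no obstacle here: all the substantive work has been carried out in Lemma~\ref{l: graph 2}, Proposition~\ref{prop-connected-comp}, and, underneath those, Corollary~\ref{c: inter} and Lemma~\ref{l: inter}. The only point that deserves a moment's care is the invariance of fixity along a rational conjugacy class, which is what upgrades the "maximal $p$-fixity" of a single element of $\CC$ to the same property for the ambient $\CC^{rat}$, and hence lets us legitimately feed $\CC^{rat}$ into the proposition.
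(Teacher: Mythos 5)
Your proof is correct and follows the same route as the paper, which simply notes that $\Gamma(\CC)$ is a subgraph of $\Gamma(\CC^{rat})$ and invokes Proposition~\ref{prop-connected-comp}. Your extra check that fixity is constant along the rational class (so that $\CC^{rat}$ inherits maximal $p$-fixity from $\CC$) is a detail the paper leaves implicit, and it is handled correctly.
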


If particular, suppose that $\Gamma(\CC)$ is connected and that $G$ is simple. Then $H$ must contain the subgroup generated by $\CC$, which is normal, and we conclude in this situation that $H=G$.

%\begin{ex}\label{ex: edgeless}
%While most uses of the corollary in this paper will exploit the fact that $\Gamma(\CC)$ has sometimes large connected components, thus preventing the stabilizers in certain binary actions from being too small, and thereby giving an obstruction to the very existence of nontrivial binary actions, here is an easy situation where we can do the opposite.  

%Suppose that $\CC$ is a conjugacy class of involutions in a group $G$, let $h\in \CC$ and $H=\langle h\rangle$, and suppose that $\Gamma(\CC)$, the graph described above, does not have any edges. This means that if $H_1, H_2$ and $H_3$ are conjugates of $H$, then $H_1\cap H_2.H_3=\{1\}$ and Lemma~\ref{lem-RC-height-2} then implies that the action of $G$ on the set of right cosets of $H$ in $G$ is binary.

%To see that such actions exist, let $G=\PSL_n(q)$ with $nq$ odd and take $\mathcal{C}$ to be the conjugacy class of involutions that are the projective image of a matrix in $\SL_n(q)$ with a $(-1)$-eigenspace of dimension $n-1$ and a $1$-eigenspace of dimension $1$. It is easy to see that if $g,h\in \CC$ then $gh$ has a $1$-eigenspace of dimension at least $n-2$. 

%Thus, for $n\geq 5$, the graph $\Gamma(\CC)$ is edgeless and the group $\PSL_n(q)$ has a transitive binary action with a point-stabilizer of order $2$.

% The problem of classifying all simple groups $G$ with a conjugacy class $\CC$ of involutions for which $\Gamma(\CC)$ is edgeless is a tantalising one!
%\end{ex}

\section{Alternating groups} \label{s: alternating}

In this section, we prove Theorem \ref{thm-binary-actions-An}, which was given in the introduction, and whose statement we reproduce here for convenience.

\begin{thm2}
Let $G$ be the alternating group $A_n$, for $n \ge 6$. Assume that there is a binary action of $G$ on the set $\Omega$. Then each orbit of $G$ on $\Omega$ is either trivial or regular.
\end{thm2}

Corollary \ref{coro-nontransitive} allows us to assume that the action of $G$ is transitive. So we consider the action of $G$ on the cosets of a subgroup $H$; we suppose that $H \ne 1$ and our aim is to prove that $H=G$.

The first step of the proof will show that $H=G$ if we know that the order of $H$ is even -- this step will be completed at the end of the next subsection. After this, we shall show that assuming $H$ to have odd order leads to a contradiction.

\subsection{Involutions}
Here we collect facts about the graphs $\Gamma(\CC)$ when $\CC$ is a conjugacy class of involutions in $A_n$, reaching a complete description of the connected components. The first step of the proof of Theorem \ref{thm-binary-actions-An} will then be trivial to complete, thanks to Corollary~\ref{coro-connected-comp}.

Some definitions. A {\em quad} is a permutation of the form $(ab)(cd)$, with $a, b, c, d$ distinct. It will be useful, for what follows, to keep in mind that there are just three quads with support $\{ a, b, c, d \}$, namely $(ab)(cd)$, $(ac)(bd)$ and $(ad)(bc)$ ; they are the nonidentity elements in a Klein group which acts regularly on $\{ a, b, c, d \}$. 

% We say that there is a {\em relation of type 1 between the quads $q$ and $q'$} when it is possible to write $q=(ab)(cd)$ and $q'= (ac)(bd)$. 

When $q=(ab)(cd)$ and $q' = (ab)(c'd')$, with $\{ c, d \} \cap \{ c', d' \} = \emptyset$, we say that the quads $q$ and $q'$ are {\em related by the exchange $(cd) \leftrightarrow (c'd')$}.

% Let $\sigma$ be any permutation. We say that there is a {\em relation of type 2 between the quads $q$ and $q'$ with respect to $\sigma$} when $q= (ab)(cd)$ and $q'= (ab)(xy)$ where $x, y$ are fixed by $\sigma$. (In all examples, $x$ and $y$ will be fixed by $q$ as well, so $x, y \not\in \{ a, b, c, d \}$.) We will also say that $x$ and $y$ are the {\em fixed points of the relation of type 2}.

\begin{lem} \label{lem-edge-quads}
Let $\CC$ be a conjugacy class of involutions in $S_n$. Let $s, t \in \CC$. Then there is an edge in $\Gamma(\CC)$ between $s$ and $t$ if and only if there is an integer $k$, quads $q_1, \ldots, q_k$ with disjoint supports satisfying $s = q_1 \cdots q_k$, and quads $q_1', \ldots, q_k'$ with disjoint support satisfying $t= q_1' \cdots q_k'$, such that for every $i$ we have either : \begin{enumerate}
\item $q_i$ and $q_i'$ have the same support and are distinct, or 
\item $q_i$ and $q_i'$ are related by the exchange $(x_iy_i) \leftrightarrow (u_iv_i)$, where $x_i$ and $y_i$ (resp.\ $u_i$ and $v_i$) are fixed by $t$ (resp.\ by $s$), and all the elements $x_i, y_i, u_i, v_i$ thus introduced as $i$ varies are distinct.
\end{enumerate}

% either a relation of type 1 between $q_i$ and $q_i'$, or a relation of type 2 with respect to $s$, with the property that the fixed points of the relations of type 2 are all distinct as $i$ varies.

In particular, if the size of the support of $s$ (or any other element of $\CC$) is not divisible by $4$, then there is no edge in $\Gamma(\CC)$ at all.
\end{lem}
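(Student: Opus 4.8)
I want to prove that an edge in $\Gamma(\CC)$ between two involutions $s,t$ of the same cycle type in $S_n$ is equivalent to a "local" description in terms of quads. The definition of an edge says: $s$ and $t$ commute, and $st^{-1} = st$ (equivalently $ts$) lies in $\CC$. Since $s,t$ are involutions, $st^{-1}=st$ and $ts=(st)^{-1}$ have the same cycle type, so really the condition is just $st \in \CC$. So the whole content is: analyze pairs of commuting involutions $s,t$ of common cycle type such that $st$ again has that cycle type.

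**Key steps.** First I would use the standard fact that two commuting involutions $s,t$ can be analyzed transposition-by-transposition: writing $u=st$, the group $\langle s,t\rangle$ is elementary abelian of order $1$, $2$ or $4$, and one can partition $\{1,\dots,n\}$ into $\langle s,t\rangle$-orbits. The orbits of size $1$ are fixed by both; orbits of size $2$ are "shared" — a transposition common to $s$ and $t$ (on which $u$ acts trivially), or moved by exactly one of $s,t$; orbits of size $4$ carry a regular Klein-four action, and on such an orbit $\{a,b,c,d\}$ the three involutions $s|,t|,u|$ are precisely the three quads with that support. This gives a canonical decomposition $s=q_1\cdots q_k \cdot (\text{shared transpositions})\cdot(\text{transpositions of }s\text{ alone})$, and similarly for $t$, where the $q_i$ range over the size-$4$ orbits. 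Second, I would impose the cycle-type conditions. Counting transpositions: the number of transpositions of $s$ equals that of $t$ equals that of $u$. On each size-$4$ orbit, $s,t,u$ each contribute exactly $2$ transpositions — automatically balanced. The transpositions shared by $s$ and $t$ contribute to $s$ and $t$ but not $u$; the transpositions of $s$ not shared contribute to $s$ and $u$ but not $t$; symmetrically for $t$. Balancing the counts forces: (number shared) appears in $s$ and $t$, (number of $s$-only) $=$ (number of $t$-only) (to balance $s$ vs $t$), and then $u$ gets (shared)$+$(s-only)$+$(t-only) $=$ (s-only)$+$(s-only)$+$... wait, this needs care: $u$ has $(s\text{-only}) + (t\text{-only})$ transpositions from the size-$2$ orbits plus $2$ per quad, while $s$ has $(\text{shared})+(s\text{-only})$ plus $2$ per quad. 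So equality forces $\text{shared} = t\text{-only}$, and by symmetry $\text{shared}=s\text{-only}$; hence all three numbers are equal, say $m$. Now I would reorganize: pair up a shared transposition $(x_iy_i)$ (fixed by... no — shared means in both $s$ and $t$; these get absorbed into the "$q_i$ same support, distinct" case only after pairing with something). Let me instead pair each $s$-only transposition with a $t$-only transposition: $q_i = (x_iy_i)(u_iv_i)$ where $(x_iy_i)$ is in $s$ alone — so $x_i,y_i$ fixed by $t$ — and $(u_iv_i)$ is in $t$ alone — so $u_i,v_i$ fixed by $s$ — giving exactly case (2), the exchange $(x_iy_i)\leftrightarrow(u_iv_i)$. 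Third, I still need to handle shared transpositions: but a shared transposition $(ab)$ appears identically in $s$ and $t$ and not in $u$; to view the contribution as a quad I pair a shared transposition $(x_iy_i)$ with another shared transposition $(u_iv_i)$, getting $q_i=q_i'=(x_iy_i)(u_iv_i)$ — but that's *not* allowed by case (1), which requires $q_i\neq q_i'$. The resolution: if $(ab)$ and $(cd)$ are both shared, then $q_i=(ab)(cd)=q_i'$ — this contributes nothing, and I should simply *not* include shared transpositions in the quad list at all; they're part of the common fixed/moved structure and the lemma's decomposition $s=q_1\cdots q_k$ with $t=q_1'\cdots q_k'$ should be read as allowing $s$ (and $t$) to have a common "tail" of shared transpositions outside the listed quads. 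Hmm — but the lemma as stated writes $s=q_1\cdots q_k$ with *no* tail. I think the intended reading is that $k$ counts only the genuinely-different blocks, and I should re-examine: actually if shared$=m$ and $s$-only$=m$ and $t$-only$=m$ with $m$ quads of type... no. Let me recount assuming the statement is exactly as written with $s=q_1\cdots q_k$: then ALL transpositions of $s$ lie in quads $q_i$, and all of $t$ in quads $q_i'$, with $q_i,q_i'$ sharing support. If $q_i=(ab)(cd)$ and $q_i'=(ab)(cd)$ that's forbidden by (1); but $q_i$ and $q_i'$ sharing support $\{a,b,c,d\}$ and both being quads means $q_i'\in\{(ab)(cd),(ac)(bd),(ad)(bc)\}$; cases (1)+(2) together cover exactly "$q_i'\neq q_i$, OR $q_i'$ obtained from $q_i$ by swapping one transposition-pair for a disjoint one fixed appropriately." So the correct reading: on each $\langle s,t\rangle$-orbit of size $4$ we get a type-(1) block (same support, the third quad being $u$, and $s|\neq t|$ automatically). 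The size-$2$ orbits must then be paired: an $s$-only with a $t$-only gives a type-(2) block $q_i=(x_iy_i)(u_iv_i)$, $q_i'=(u_iv_i)(x_iy_i)$? No — $q_i'$ must be related to $q_i$ by the exchange, i.e. $q_i=(x_iy_i)(\text{something in }s), q_i'=(\text{same something in }t)(u_iv_i)$. So I pair an $s$-only transposition $\sigma_1$ with a *shared* transposition $\rho$, forming $q_i=\sigma_1\rho$ for $s$; and pair $\rho$ with the matching $t$-only transposition $\sigma_2$, forming $q_i'=\rho\sigma_2$ for $t$. Then $q_i,q_i'$ are related by the exchange $\sigma_1\leftrightarrow\sigma_2$ with $\text{supp}(\sigma_1)$ fixed by $t$, $\text{supp}(\sigma_2)$ fixed by $s$. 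This consumes one $s$-only, one $t$-only, and one shared per block; since all three counts equal $m$, it works out exactly with $k=m$ type-(2) blocks plus one type-(1) block per size-$4$ orbit. The converse direction (these configurations give commuting involutions with $st\in\CC$) is a direct check.

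**The main obstacle.** The genuinely fiddly part is exactly the bookkeeping I just stumbled through: getting the combinatorial pairing of the size-$2$ orbits right so that it matches the precise wording of conditions (1) and (2) — in particular realizing that a type-(2) block must consume one shared transposition together with one $s$-only and one $t$-only, and that the equality of the three counts (shared = $s$-only = $t$-only), forced by the cycle-type condition on $s$, $t$ and $st$, is exactly what makes the pairing possible with nothing left over. Everything else — the orbit decomposition of $\langle s,t\rangle$, the identification of the three quads on a size-$4$ orbit with a regular Klein group, and the final "in particular" clause (if $st\in\CC$ with $s,t$ commuting involutions then $|\mathrm{supp}(s)|$ = $4\cdot(\#\text{quads}) \equiv 0 \pmod 4$, so no edges exist when $4\nmid|\mathrm{supp}(s)|$) — follows routinely once the decomposition is in place.
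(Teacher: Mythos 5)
Your proposal is correct and follows essentially the same route as the paper: decompose $\{1,\dots,n\}$ into orbits of the Klein group $\langle s,t\rangle$, classify the size-$2$ orbits by their stabilizer into shared, $s$-only and $t$-only transpositions, use equality of cycle types of $s$, $t$ and $st$ to force the three counts to coincide, and then build each type-(2) quad by pairing one shared transposition with one $s$-only (for $s$) and with one $t$-only (for $t$) transposition — exactly the quads $\delta_i\gamma_i$ and $\delta_i\beta_i$ of the paper's proof. The final pairing you settle on (after the detours) is the paper's, and the converse and the divisibility-by-$4$ clause are handled the same way.
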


\begin{proof}
Suppose there is an edge between $s$ and $t$. The group $K = \langle s, t \rangle$ is a Klein group, and we may write the disjoint union
\[ \{ 1, 2, \ldots, n \}  = A \cup B \cup C \cup D \cup E \]
according to the types of orbits of $K$. More precisely, let $A$ be the set of elements with trivial stabilizer, so that the action of $K$ on $A$ is free (semiregular). Each orbit of $K$ on $A$ is of the form $\{ a, b, c, d \}$ where $s$ acts as $(ab)(cd)$ and $t$ acts as $(ac)(bd)$. In other words, the restriction of $s$ to $A$ is given by a product $q_1 \cdots q_a$ of quads with disjoint supports, while $t$ is given by $q_1' \cdots q_a'$, and $q_i$ has the same support as $q_i'$. 

% Note that the permutations of $A$ induced by $s$, $t$ or $st$ are all conjugate within the symmetric group of $A$. \pierre{do we need this sentence at all?}

The set $E$ will be that of fixed points of $K$. Now let $B$ resp.\ $C$ resp.\ $D$ be the set of elements whose stabilizer is $\langle s \rangle$ resp.\ $\langle t \rangle$ resp.\ $\langle st \rangle$. The action of $t$ on $B$ is given by a product $\beta_1 \cdots \beta_b$ where each $\beta_i$ is a transposition (that is $\beta_i = (xy)$ for some $x, y \in B$), while $s$ acts trivially on $B$. Similarly, the action of $s$ on $C$ is given by $\gamma_1 \cdots \gamma_c$ while $t$ acts trivially on $C$, and finally $st$ acts trivially on $D$, on which $s$ and $t$ both act as $\delta_1 \cdots \delta_d$. Restricting to $B \cup C \cup D$, we may write 
\[ s = \gamma_1 \cdots \gamma_c \delta_1 \cdots \delta_d \, , \quad t = \beta_1 \cdots \beta_b \delta_1 \cdots \delta_d \, , \quad
st = \gamma_1 \cdots \gamma_c \beta_1 \cdots \beta_b \, . \]

Using that $s$, $t$ and $st$ are conjugate to one another, we conclude that $b= c= d= \ell$, say. Consider now the quad $\delta_i \gamma_i$ for $1 \le i \le \ell$, as well as the quad $\delta_i \beta_i$. If $\gamma_i = (x_i y_i)$ and $\beta_i = (u_i v_i)$, then the quads are related by the exchange $(x_i y_i) \leftrightarrow (u_i v_i)$. This proves the existence of the quads as announced. 

For the converse, one simply works backwards.
\end{proof}

When a conjugacy class $\CC$ of $S_n$ has edges in its graph, it must therefore be contained in $A_n$ rather than just $S_n$. Moreover, $\CC$ then still forms a conjugacy class of $A_n$.

\begin{ex} \label{ex-compute-quads}
Let $\CC$ denote the conjugacy class of a single quad in $A_n$ (or $S_n$, this is the same). Assuming that $n \ge 6$, we show that $\Gamma(\CC)$ is connected. Let us write $s \equiv t$ when $s$ and $t$ are connected by an edge. We can begin by computing : 
\begin{align*}
(12)(34)  & \equiv (14)(23) & \textnormal{(same support)} \\
          & \equiv (14)(56) & \textnormal{(exchange of fixed points)} \\
          & \equiv (16)(45) & \textnormal{(same support)} \\
          & \equiv (23)(45) & \textnormal{(exchange of fixed points)}
\end{align*}

Swapping $1$ and $2$ in this computation (or in other words, conjugating everything by $(12)$), we get a path between $(21)(34) = (12)(34)$ and $(13)(45)$. Swapping $1$ and $3$ gives a path between $(32)(14) \equiv (12)(34)$ and $(21)(45)$, and finally, swapping $1$ and $4$ produces a path between $(42)(31) \equiv (12)(34)$ and $(23)(15)$. 

We have shown that, for every subset $\Lambda$ of $\{1,2,3,4,5\}$ of cardinality $4$, there is a path connecting $(12)(34)$ to a quad $q$ with support equal to $\Lambda$. Since all quads with the same support are connected to each other, we conclude that if $q$ is a quad with support contained in $\{ 1, 2, 3, 4, 5 \}$, then there is a path from $(12)(34)$ to $q$.

Now more generally, suppose the support of $q$ is $\{ a, b, c, d\}$ with $a<b<c<d$, and that $d \ge 6$. Then there are two elements $x$ and $y$ with $1 \le x < y < d$ which are fixed by $q$. We have $q \equiv (ab)(cd) \equiv (ab)(xy)$, and the maximal element in the support of $(ab)(xy)$ is less than $d$. Iterating, we have a path between $q$ and a quad whose support is in $\{ 1, 2, 3, 4, 5 \}$ as above. We have shown the existence of a path between $(12)(34)$ and any quad.

We note that this result does not hold when $n=5$. In this case the graph $\Gamma(\CC)$ is made up of five disjoint triangles, each corresponding to three quads with a shared fixed point. For instance, one of these triangles has vertices $(12)(34)$, $(13)(24)$ and $(14)(23)$, the three quads in $A_5$ that fix the point $5$.
\end{ex}

The next lemma provides a statement in which only one quad is modified at a time (note that, in the notation of Lemma \ref{lem-edge-quads}, the quads $q_i$ and $q_i'$ are distinct for all $i$). The arguments in the previous example will then generalize almost immediately.

\begin{lem}
Let $\CC$ be a conjugacy class in $A_n$, and let 
\[ s = q_1 \cdots q_k \textit{ and } t = q_1' q_2 \cdots q_k \]
be elements of $\CC$. Assume that $q_1,\dots, q_k$ (resp. $q_1',q_2,\dots, q_k$) are quads with disjoint supports. Assume, moreover, that $q_1'$ is any quad such that either $q_1$ and $q_1'$ have the same support, or $q_1$ and $q_1'$ are related by an exchange $(xy) \leftrightarrow (uv)$, where $x$ and $y$ are fixed by $t$, and $u$ and $v$ are fixed by $s$.
   
Then $s$ and $t$ belong to the same connected component of $\Gamma(\CC)$.
\end{lem}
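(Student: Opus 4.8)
The plan is to extract a single combinatorial lemma — a statement about moving the individual quads while keeping their supports fixed — and then feed both alternatives of the hypothesis into it. Throughout, write $\CC$ for the conjugacy class. Since $s$ has cycle type $2^{2k}$, the class $\CC$ consists of \emph{all} elements of $A_n$ of this type: the $S_n$-class does not split in $A_n$, because the $S_n$-centralizer of such an element contains one of its own transpositions, which is an odd permutation. I would record this at the start and use it repeatedly to certify that any product of $k$ quads with pairwise disjoint supports lies in $\CC$. If $q_1=q_1'$ then $s=t$ and there is nothing to prove, so I may assume $q_1\neq q_1'$; in particular, in the exchange alternative the supports of $q_1$ and $q_1'$ are genuinely distinct.

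The key claim I would prove first is: if $u=p_1\cdots p_m$ and $u'=p_1'\cdots p_m'$ both lie in $\CC$, where the $p_i$ (resp.\ the $p_i'$) are quads with pairwise disjoint supports and $p_i,p_i'$ have the same support for each $i$ (with $p_i=p_i'$ allowed), then $u$ and $u'$ lie in the same connected component of $\Gamma(\CC)$. The idea gives a path of length $2$: since there are exactly three quads on each $4$-set $\operatorname{supp}(p_i)$, pick a quad $\bar p_i$ with that support that is different from \emph{both} $p_i$ and $p_i'$, and set $v=\bar p_1\cdots\bar p_m$. Then $v\in\CC$, and $u$, $v$, $u'$ pairwise commute (slot by slot, inside the Klein group on each $\operatorname{supp}(p_i)$); moreover in each slot of $uv$ and of $vu'$ the two factors are distinct quads with a common support, so their products are again single quads, whence $uv\in\CC$ and $vu'\in\CC$. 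Thus $u$ is joined to $v$, and $v$ to $u'$, in $\Gamma(\CC)$.

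With this in hand, the same-support alternative is immediate: apply the claim with $m=k$, taking $(p_i)=(q_1,q_2,\dots,q_k)$ and $(p_i')=(q_1',q_2,\dots,q_k)$. For the exchange alternative, write $q_1=(ab)(xy)$ and $q_1'=(ab)(uv)$, with $x,y$ fixed by $t$, $u,v$ fixed by $s$, and $\{x,y\}\cap\{u,v\}=\emptyset$. I would introduce $w=q_1'\hat q_2\cdots\hat q_k$, where $\hat q_i$ is any quad on $\operatorname{supp}(q_i)$ with $\hat q_i\neq q_i$. One checks that the supports $\{a,b,u,v\},\operatorname{supp}(q_2),\dots,\operatorname{supp}(q_k)$ are pairwise disjoint (using that $\{u,v\}$ is disjoint from $\operatorname{supp}(s)\supseteq\operatorname{supp}(q_i)$ for $i\ge2$), so $w\in\CC$; and Lemma~\ref{lem-edge-quads}, applied to the decompositions $s=q_1q_2\cdots q_k$ and $w=q_1'\hat q_2\cdots\hat q_k$, yields an edge between $s$ and $w$ — slot $1$ being of type (2) via the exchange $(xy)\leftrightarrow(uv)$ (here $x,y$ are fixed by $w$ since $\{x,y\}$ avoids $\{a,b,u,v\}$ and every $\operatorname{supp}(q_i)$, and $u,v$ are fixed by $s$ by hypothesis), and slots $2,\dots,k$ being of type (1). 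Finally $w$ and $t=q_1'q_2\cdots q_k$ have matching supports in every slot (slot $1$ is $q_1'$ in both), so the key claim connects them; chaining, $s$ and $t$ lie in the same component.

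The one genuine obstacle is structural rather than computational: a single edge of $\Gamma(\CC)$ forces \emph{every} quad-slot to change (this is exactly the content of Lemma~\ref{lem-edge-quads}), so one cannot directly ``move slot $1$ and leave the rest fixed'' in one step. The ``out-and-back through a generic configuration $v$'' device in the key claim is precisely what bypasses this, and it is also the mechanism that makes the exchange in the second case land on the correct quad. The bookkeeping to watch is then only (i) that all the auxiliary elements genuinely have cycle type $2^{2k}$ — which, together with the non-splitting remark above, places them in $\CC$ — and (ii) that the disjointness and fixed-point hypotheses needed to invoke Lemma~\ref{lem-edge-quads} in the exchange case hold, which is an unwinding of ``$x,y$ fixed by $t$, $u,v$ fixed by $s$''.
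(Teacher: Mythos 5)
Your proof is correct and takes essentially the same route as the paper's: both build a path of length at most three through auxiliary elements in which \emph{every} quad-slot is altered at each step (to meet the all-slots-must-change constraint of Lemma~\ref{lem-edge-quads}) and then steered back to its target value, your $w$ and $v$ corresponding to the paper's $s_1$ and $s_2$. The only cosmetic difference is that you package the ``out-and-back'' device as a separate same-support claim and make explicit the non-splitting of the class, both of which the paper handles implicitly.
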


\begin{proof}
Of course there is nothing to prove if $q_1' = q_1$, so assume $q_1' \ne q_1$. Let $s_0 = s$ and $s_3 = t$. We shall introduce $s_1, s_2 \in \CC$ such that there is an edge between $s_j$ and $s_{j+1}$ for $j=0, 1, 2$.

Let $i > 1$ and let $q_i = (ab)(cd)$. Put $q_i' = (ac)(bd)$, $q''_i = (ad)(bc)$. In this way, the quads $q_i, q_i'$ and $q_i''$ are the three distinct quads with support $\{ a, b, c, d \}$.

When $i=1$, we have already $q_1$ and $q_1'$ ; if $q_1' = (ab)(cd)$, we introduce $q_1''= (ad)(bc)$, so that $q_i'$ and $q_i''$ are distinct, of the same support.

Now let 
\[ s_1 = q_1' q_2' \cdots q_k' \qquad \textnormal{and}\qquad s_2 = q_1'' q_2'' \cdots q_k'' \, . \]
Applying lemma \ref{lem-edge-quads} several times shows the existence of an edge between $s_j$ and $s_{j+1}$, for $j= 0, 1, 2$. Note that, for $i>0$, the quads involved are $q_i, q_i', q_i'', q_i$, which all have the same support, while for $i=0$, the quads are $q_1, q_1', q_1'', q_1'$, the first two either have the same support or are related by an exchange of fixed points, while the support does not change afterwards.
\end{proof}

%(ac)(bd) --> (ad)(bc) --> (ab)(cd).

\begin{prop} \label{prop-graphs-An}
Let $\CC$ be a conjugacy class in $A_n$. Suppose that the elements of $\CC$ are products of $k$ quads with disjoint support (or equivalently that the support of an element of $\CC$ is a set of size $4k$). Then: \begin{enumerate}
\item if $n=4k$, the graph $\Gamma(\CC)$ is connected, 
\item if $n \ge 4k+2$, the graph $\Gamma(\CC)$ is connected, 
\item if $n= 4k+1$, the graph $\Gamma(\CC)$ has $n$ connected components. One of these is $\CC \cap A_{n-1}$.
\end{enumerate}
\end{prop}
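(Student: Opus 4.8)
The strategy is to combine the two preceding lemmas so as to reduce everything to the case $k=1$, which was essentially handled in Example~\ref{ex-compute-quads}. The key observation is that the ``one quad at a time'' lemma lets us move within a connected component of $\Gamma(\CC)$ by altering a single quad $q_i$ of an element $s = q_1\cdots q_k$ — either replacing it by another quad on the same support, or performing an exchange of a transposition inside $q_i$ for a transposition on two points fixed by $s$ (and which we will also need to be fixed by the target). Since any two elements of $\CC$ have supports of the same size $4k$, and we are free to choose which points lie outside the support when there is room, the plan is: first argue that the free points (those in $\{1,\dots,n\}$ outside the support) give us enough room to manoeuvre in cases (1) and (2), and then in case (3) identify exactly the obstruction.

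First I would treat case (2), $n \ge 4k+2$. Fix a ``reference'' element, say $s_0$ with support $\{1, 2, \dots, 4k\}$, chosen so that its first quad is $(12)(34)$. Given an arbitrary $s = q_1\cdots q_k \in \CC$, I want a path from $s$ to $s_0$. The idea is to ``sort'' the support of $s$ one quad at a time. If some $q_i$ has a point $m$ in its support with $m > 4k$, then because $|{\rm supp}(s)| = 4k$ there is a point $m' \le 4k$ fixed by $s$; unless all of $\{1,\dots,4k\}$ is already occupied (in which case $m \le 4k$ for all quads anyway), and using the slack $n \ge 4k+2$ to guarantee a \emph{second} free point when we need it for an exchange of fixed points, one checks that the single-quad lemma applies (sometimes one must first rotate a transposition within $q_i$ using the ``same support'' move, then exchange, exactly as in the chain $(12)(34) \equiv (14)(23) \equiv (14)(56) \equiv (16)(45) \equiv (23)(45)$ of Example~\ref{ex-compute-quads}). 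Iterating, we reach an element with support $\{1,\dots,4k\}$; then, working quad by quad using only ``same support'' and ``exchange of fixed points among $\{1,\dots,4k\}$'' moves — available precisely because within a $4$-element block the analysis of Example~\ref{ex-compute-quads} is purely internal and only needs a fifth and sixth point drawn from the remaining $4(k-1)$ support points of the other quads, which exist since $k \ge 1$ and we may assume $k \ge 2$ here (the case $k=1$, $n\ge 6$ is Example~\ref{ex-compute-quads}) — we can permute the blocks and the points within them to reach $s_0$. This shows $\Gamma(\CC)$ is connected.

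For case (1), $n = 4k$, there are no free points at all, so the only moves available from the single-quad lemma are the ``same support'' moves, which alone would break $\Gamma(\CC)$ into many pieces. The point is that $\Gamma(\CC)$ may still have edges coming from the full strength of Lemma~\ref{lem-edge-quads}, where \emph{several} quads are exchanged simultaneously: an edge exchanging $(xy)\leftrightarrow(uv)$ with $x,y$ fixed by $t$ and $u,v$ fixed by $s$ is possible as long as $s$ and $t$ \emph{together} have a fixed point, which happens iff the supports of $s$ and $t$ are not both all of $\{1,\dots,4k\}$. So when $n = 4k$ I would argue: given $s$ with full support, pick a neighbouring $t$ (via a same-support move on one block, say turning $(ab)(cd)\cdots$ into $(ac)(bd)\cdots$) — that does not change the support — so I instead use a \emph{two-quad} exchange to free up room: replace $q_i q_j$ by quads that between them omit one point, reducing to a configuration with a free point, after which the argument of case (2) applies; then reverse. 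The bookkeeping that such a two-quad move is a legitimate edge (its quads have disjoint support and the exchanged transpositions sit on mutually-fixed points) is the routine-but-fiddly part. I expect this reduction — showing that in the $n=4k$ case one can always ``make room'' — to be the main obstacle.

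Finally, case (3), $n = 4k+1$. Here each element of $\CC$ has exactly one fixed point, call it $\mathrm{fix}(s)$. I claim this fixed point is a \emph{connected-component invariant}. Indeed, consider an edge between $s$ and $t$: by Lemma~\ref{lem-edge-quads} the points moved by $s$ but fixed by $t$, and vice versa, must be among the $x_i,y_i,u_i,v_i$ of the exchanges — but an exchange of type~(2) requires $u_i, v_i$ fixed by $s$, i.e.\ two points fixed by $s$, whereas $s$ fixes only one point; hence no type-(2) exchange can occur, every $q_i$ and $q_i'$ share a support, and therefore $s$ and $t$ have the same support and thus the same fixed point. So $\Gamma(\CC)$ is the disjoint union, over the $n$ choices of fixed point $m \in \{1,\dots,n\}$, of the induced subgraphs on $\{s \in \CC : \mathrm{fix}(s) = m\}$. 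For the fixed point $m = n$, that induced subgraph is the graph on the conjugacy class of products of $k$ quads in $A_{n-1} = A_{4k}$ — which is connected by case~(1) — giving the component $\CC \cap A_{n-1}$; by symmetry each of the $n$ pieces is connected and isomorphic to it. Hence $\Gamma(\CC)$ has exactly $n$ connected components, completing the proof. $\qquad\square$
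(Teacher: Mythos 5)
Your case (3) is essentially correct and matches the paper's reasoning (the unique fixed point is a connected-component invariant, and each piece reduces to case (1) on $A_{4k}$). The genuine gap is in case (1), on which your case (3) also depends. You assert that when $n=4k$ the only available single-quad moves are ``same support'' moves and that these ``alone would break $\Gamma(\CC)$ into many pieces''; this is false, and your proposed repair fails. The same-support moves do \emph{not} disconnect the graph, because the decomposition of $s\in\CC$ into quads is not canonical: only its decomposition into $2k$ disjoint transpositions is, and you may regroup those transpositions into quads at will before applying the single-quad lemma. This is exactly how the paper proves (1): if $s$ involves the transpositions $(1a)$ and $(2b)$ with $a\neq 2$, group them into the single quad $(1a)(2b)$, use a same-support move to replace it by $(12)(ab)$, and iterate to force $(12)$, then $(34)$, and so on, until $(12)(34)\cdots(4k-1,4k)$ is reached. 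Your alternative --- a ``two-quad exchange'' replacing $q_iq_j$ by quads that ``between them omit one point'' so as to ``reduce to a configuration with a free point'' --- is impossible: every element of $\CC$ has support of size exactly $4k=n$, so no vertex of $\Gamma(\CC)$ has a free point, and a permutation omitting a point from its support would not lie in $\CC$. Indeed, since the exchanges of type (2) in Lemma~\ref{lem-edge-quads} require two points $u_i,v_i$ fixed by $s$, when $n=4k$ there are no exchange edges whatsoever, and the graph must be (and is) connected by same-support edges alone.

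A smaller but related error occurs at the end of your case (2): the fifth and sixth points needed for the moves of Example~\ref{ex-compute-quads} must be fixed points of the \emph{whole} permutation $s$, not merely points outside the block being modified, so they cannot be ``drawn from the remaining $4(k-1)$ support points of the other quads'' --- those points are moved by $s$ and hence unavailable for an exchange. The paper instead uses the two genuine fixed points guaranteed by $n\ge 4k+2$ to relocate the support of one quad at a time into $\{1,\dots,4k\}$, and performs all rearrangement inside the support via same-support moves after regrouping, exactly as in case (1).
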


\begin{proof}
(1) Let $s \in \CC$. We say that $s$ {\em involves} the transposition $\tau$ if, when we write $s$ as the product of distinct transpositions of disjoint support, $\tau$ occurs in the product. The element $s$ involves $(1a)$ for some $a$; if $a \ne 2$, then $s$ also involves $(2b)$ where $b \ne 1$. In this case $s=(1a)(2b) q_2 \cdots q_k$ where $q_i$ is a quad for $i>1$. By the last lemma, we have a path from $s$ to  $(12)(ab) q_2 \cdots q_k$.

Thus we may in fact suppose that $s$ involves $(12)$. Continuing, we can force the presence of $(34)$ and then $(56)$ and so on, until we have reached $(12)(34) \cdots (4k-1, 4k)$, following a path starting from $s$.

(2) Let $s \in \CC$, and let the support of $s$ be $\{ a_1, a_2, \ldots, a_{4k} \}$ with $a_i < a_{i+1}$. The argument of (1) applies, and gives a path between $s$ and $t= (a_1 a_2) (a_3 a_4)\ldots (a_{4k-1} a_{4k})$. Let $F$ denote the set of fixed points of $t$ and $A= F \cup \{ a_1, a_2, a_3, a_4 \}$. Then $A$ is left stable by the action of $t$, and certainly $\{ 1, 2, 3, 4 \}\subset A$; also note $|A| \ge 6$ by the assumption on $n$. If $q=(a_1a_2)(a_3 a_4)$ is the quad induced on $A$ by $t$, then the computations of Example \ref{ex-compute-quads} show that there is a sequence of moves from $q$ to $(12)(34)$; that is, there is a sequence of quads in $A$, say $q_1=q, q_2, \ldots, q_\ell=(12)(34)$, such that the last lemma applies between $q_i u$ and $q_{i+1} u$, where $u= (a_5 a_6)\ldots (a_{4k-1} a_{4k})$, for each index $i$. This shows the existence of a path between $t=q_1 u$ and $q_\ell u = (12)(34) (a_5 a_6) \cdots (a_{4k-1} a_{4k})$. Iterating, we have a path leading to $(12) (34) \cdots (4k-1, 4k)$.

(3) Now suppose $n= 4k+1$, so that if $s \in \CC$, then $s$ has just one fixed point $a$ with $1 \le a \le n$. Any permutation commuting with $s$ must leave $a$ fixed. Suppose $a=n$ to start with. Then the connected component of $\Gamma(\CC)$ containing $s$ only comprises involutions of $A_{n-1}$, and by (1) this connected component is in fact $\CC \cap A_{n-1}$. Similarly, each other choice of $a$ gives a connected component.
\end{proof}

Having worked out these connected components, we return to the proof of Theorem \ref{thm-binary-actions-An}, with the notation introduced right after its statement.

So suppose that $H$ has even order, and let $\CC$ be a conjugacy class of involutions of maximal 2-fixity. By Corollary \ref{coro-connected-comp}, we know that $H$ contains a component group of $\Gamma(\CC)$. Using the fact that we are assuming $n\geq 6$ and that $A_n$ is simple for $n\geq 5$,  Proposition \ref{prop-graphs-An} implies that the component groups must be all of $A_n$ when $n=4k$ or $n\ge 4k+2$, so that $H=G$ in this case. 

Likewise, when $n=4k+1$ the same proposition guarantees that $A_{n-1} \subset H$. The natural action of $A_n$ on $\{ 1, \ldots, n \}$ is obviously primitive, so $A_{n-1}$ is maximal in $A_n$, and thus we have either $H=A_{n}$ or $H= A_{n-1}$. In the latter case however, the action of $G$ on the cosets of $H$ is nothing but the natural action itself, which is not binary (Example \ref{ex-natural-alternating-action}). This contradiction shows that $H=G$ in all cases when $H$ has even order.

\subsection{When \texorpdfstring{$H$ is not a $3$-group}{H is not a 3 group}}

We continue the proof of Theorem \ref{thm-binary-actions-An}, now assuming that the order of $H$ is odd, and we seek a contradiction. First, we assume the existence of a prime number $p > 3$ which divides the order of $H$ (in other words, we exclude the case when $H$ is a $3$-group).

\begin{lem}
The group $H$ contains $p$-cycles.
\end{lem}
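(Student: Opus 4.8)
The plan is to apply Corollary~\ref{coro-connected-comp} not to the class of $p$-cycles (which may well be disjoint from $H$) but to a conjugacy class $\CC$ of elements of order $p$ of \emph{maximal} $p$-fixity for the action of $G=A_n$ on $\Omega=(G:H)$. The key preliminary point is that such a class necessarily meets $H$. Indeed, for the coset action an element $y$ of order $p$ has $|\Fix(y)|>0$ precisely when some conjugate of $y$ lies in $H$, since $Hg\cdot y=Hg$ is equivalent to $gyg^{-1}\in H$. As $p\mid|H|$, Cauchy's theorem gives an element of order $p$ inside $H$, whose fixity is at least $1$; so the maximal $p$-fixity is positive, and if $\CC$ attains it then every element of $\CC$ is conjugate into $H$, whence $\CC\cap H\neq\emptyset$. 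Fix $g_0\in\CC\cap H$. Every element of $\CC$ is a product of some fixed number $k$ of disjoint $p$-cycles; since $p$ is odd each such $p$-cycle is an even permutation, so writing $g_0=d_1\cdots d_k$ we obtain $E_0:=\langle d_1,\dots,d_k\rangle\cong(C_p)^k$, an elementary abelian subgroup of $A_n$ contained in $C_{A_n}(g_0)$.

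The crux is to show that the connected component of $\Gamma(\CC)$ containing $g_0$ engulfs the whole of $\CC\cap E_0$. Identifying $E_0$ with $\f_p^k$ via $d_1^{a_1}\cdots d_k^{a_k}\leftrightarrow(a_1,\dots,a_k)$, the set $\CC\cap E_0$ is exactly the set of vectors with no zero coordinate, and two such vectors $u,v$ span an edge of $\Gamma(\CC)$ precisely when $u_i\neq v_i$ for all $i$ (they commute automatically, $E_0$ being abelian, and $u-v$ lies in $\CC$ iff it too has no zero coordinate). I would then check that the resulting graph on $(\f_p\setminus\{0\})^k$ is connected: it suffices to join any two vectors differing in a single coordinate, and for such $u,v$ one can choose $w$ with $w_i\notin\{0,u_i,v_i\}$ in the coordinate where they differ and $w_i\notin\{0,u_i\}$ elsewhere --- which is possible exactly because $|\f_p\setminus\{0\}|=p-1\ge 4$, i.e.\ $p>3$ --- so that $u\sim w\sim v$. (For $p=3$ each vertex has a single neighbour and the graph is a perfect matching, which is precisely why the case where $H$ is a $3$-group must be treated separately.) Since the full-weight vectors additively span $\f_p^k$, the component group of $g_0$ in $\Gamma(\CC)$ contains $E_0$.

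Corollary~\ref{coro-connected-comp}, applied to $\CC$ and $g_0\in\CC\cap H$ and using the hypothesis that the action is binary, now forces the component group of $g_0$ to lie inside $H$; hence $E_0\le H$. But $E_0\cong(C_p)^k$ visibly contains $p$-cycles --- for instance $d_1$ --- so $H$ contains a $p$-cycle, as desired. The step requiring the most care is the connectivity argument: since we do not control $k$, the number of $p$-cycles in a maximal-fixity element, the graph induced on $\CC\cap E_0$ must be shown connected for \emph{every} $k\ge 1$; and one should dispose of the degenerate cases --- $k=1$, where $g_0$ is already a $p$-cycle, and the splitting of the $p$-cycle class when $n=p+1$ --- which are immediate but worth mentioning.
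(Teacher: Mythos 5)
Your proof is correct and follows essentially the same route as the paper: both apply Corollary~\ref{coro-connected-comp} to a maximal-$p$-fixity class $\CC$ meeting $H$, and both find the needed edges of $\Gamma(\CC)$ inside the elementary abelian group $\langle d_1,\dots,d_k\rangle$, using $p>3$ to pick a suitable exponent. The only difference is one of economy: the paper exhibits a single edge from $g_0=d_1\cdots d_k$ to $h=d_1^x d_2^{-1}\cdots d_k^{-1}$ and extracts the $p$-cycle $g_0h=d_1^{1+x}$ directly, whereas you prove connectivity of the whole induced subgraph on $\CC\cap E_0$ and conclude $E_0\le H$ --- a slightly stronger intermediate statement obtained by the same mechanism.
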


\begin{proof}
Let $\CC$ be a conjugacy class of $p$-elements of $G$ of maximal $p$-fixity, in the action of $G$ on the cosets of $H$. Certainly we can find $g \in \CC \cap H$. 

There is nothing to prove if the elements of $\CC$ are $p$-cycles, so assume that $g = c_1 c_2 \cdots c_k$ where $k>1$ and each $c_i$ is a $p$-cycle. Note that this means that $\CC$ contains all permutations which can be written as the product of $k$ disjoing $p$-cycles. Let $x\in \{2,3,\dots, p-2\}$ (this is possible as $p>3$). Now introduce 
\[ h = c_1^x c_2^{-1}\cdots c_k^{-1} \in \CC \, . \]
We have $[g,h]=1$ and $gh^{-1} = c_1^{1-x} c_2^2 \cdots c_k^2 \in \CC$, and we see that $g$ and $h$ are connected by an edge in $\Gamma(\CC)$. By Corollary \ref{coro-connected-comp}, we know that $h \in H$. However, we have
\[  gh = c_1^{1+x} \in H    \]
and we are done. 
\end{proof}

It will be useful to recall a few general facts about $p$-cycles. It seems more convenient to provide a direct argument rather than refer to the literature, and the following proposition has benefited from a conversation on MathOverflow (to be more precise, a question asked by Robinson and an answer from Müller \cite{mo_rm}). The fourth point of the proposition is of a somewhat different nature.

\begin{prop} \label{prop-p-cycles}
\begin{enumerate}
\item Let $K$ be a subgroup of $A_p$, where $p$ is prime, which is generated by two $p$-cycles. If $K$ is not abelian, then the order of $K$ is even. 
\item Let $s,t \in A_n$ be two cycles with supports $S$ and $T$. Assume that $S\cap T$, $S \smallsetminus T$ and $T \smallsetminus S$ are all nonempty. Then the group generated by $s$ and $t$ has even order.
\item Let $K$ be a subgroup of $A_n$ of odd order, and let $s, t \in K$ be $p$-cycles, where $p$ is a prime. Then either the supports of $s$ and $t$ are disjoint, or $\langle s \rangle = \langle t \rangle$. In particular, $s$ and $t$ commute.
\item Let $p$ be an odd prime, and let $s$ and $t$ be two $p$-cycles. Then $\langle s \rangle$ and $\langle t \rangle$ are conjugate in $A_n$.
\end{enumerate}
\end{prop}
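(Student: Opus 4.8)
I would prove the four parts in the order (1), (2), (3), (4), since (3) uses (1) and (2).

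For part (1): $K$ contains a $p$-cycle, so it is transitive on the $p$ points, hence primitive (prime degree). If $K$ is not abelian, then its two generating $p$-cycles $s,t$ satisfy $\langle s\rangle\neq\langle t\rangle$, as otherwise $K=\langle s\rangle$ would be cyclic. Were $K$ permutation isomorphic to a subgroup of $\AGL(1,p)$, then both $s$ and $t$ — being of order $p$ — would be translations (an affine map $x\mapsto ax+b$ with $a\neq 1$ has a fixed point, so is not a $p$-cycle), and $K$ would lie in the abelian group of translations, a contradiction; hence $K\not\leq\AGL(1,p)$. A solvable transitive group of prime degree is affine (Galois), so $K$ is not solvable, whence $|K|$ is even by the Feit--Thompson theorem; alternatively, by Burnside's dichotomy $K$ is $2$-transitive, so $p(p-1)\mid|K|$ and $|K|$ is even because $p$ is odd. (For $p=2,3$ the statement is vacuous: $A_2$ has no $2$-cycle, and two $3$-cycles in $A_3$ generate $A_3$, which is abelian.)

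For part (2): after replacing $n$ by $|S\cup T|$ I may take $G_0:=\langle s,t\rangle$ transitive on $\Delta=S\cup T$ ($S$ is an $s$-orbit, $T$ a $t$-orbit, and they meet), with $G_0\leq\operatorname{Alt}(\Delta)$ since $s,t$ have odd length. Assume for contradiction $|G_0|$ is odd, so $|\Delta|$ is odd. The main step is to reduce to $G_0$ primitive. Suppose $G_0$ preserves a minimal nontrivial block system $\mathcal{B}$. Since $s$ is a single cycle on $S$ and fixes $\Delta\smallsetminus S$ pointwise, any block meeting both $S$ and its complement is $s$-invariant and hence contains all of $S$; using $S\smallsetminus T\neq\emptyset\neq T\smallsetminus S$, a short case analysis rules out that $s$ or $t$ acts trivially on $\mathcal{B}$ (this would force $S\subseteq T$, $T\subseteq S$, or intransitivity) and shows that $S$, $T$, $S\cap T$, $S\smallsetminus T$, $T\smallsetminus S$ are all unions of blocks. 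Then $\bar s,\bar t$ (induced on the block set $\bar\Delta$) are again cycles — single cycles because $s|_S$ is — of odd length, with $\overline{S\cap T}$, $\overline{S\smallsetminus T}$, $\overline{T\smallsetminus S}$ all nonempty, and $\langle\bar s,\bar t\rangle$ is primitive on the strictly smaller set $\bar\Delta$; the primitive case (below) makes $\langle\bar s,\bar t\rangle$ have even order, contradicting that it is a quotient of the odd-order group $G_0$. So $G_0$ is primitive. By Feit--Thompson $G_0$ is solvable, hence — being primitive — of affine type: $G_0=V\rtimes H\leq\AGL(e,p)$ with $V$ regular elementary abelian of order $|\Delta|=p^e$ (so $p$ is odd) and $H\leq\GL(e,p)$ a point stabilizer of odd order. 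Since $s$ fixes the $|T\smallsetminus S|\geq 1$ points of $\Delta\smallsetminus S$, after conjugating $s\in H$ acts linearly on $V=\f_p^{\,e}$; then $\Fix(s)=\ker(s-1)$ has some dimension $f$, and $s$ is a single cycle on the remaining $p^e-p^f$ points, so $\operatorname{ord}(s)=p^e-p^f$. If $f<e$ this is even (as $p$ is odd), contradicting $|H|$ odd; and $f=e$ forces $s=1$, absurd. This settles (2), and it is here I expect the real work to lie — in the block-theoretic reduction, and in the input that a primitive group of odd order is affine (Feit--Thompson, or, if one reduces the blocks all the way down, Burnside's dichotomy for prime degree).

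For part (3): suppose the supports of the $p$-cycles $s,t\in K$ meet but $\langle s\rangle\neq\langle t\rangle$; here $p$ is odd since $A_n$ has no transposition. If the supports are equal, then $\langle s,t\rangle\leq K$ has odd order, hence is abelian by (1), hence lies in the centralizer of $s$ in the symmetric group on that common support, namely $\langle s\rangle$ — forcing $t\in\langle s\rangle$, a contradiction. If the supports are unequal but not disjoint, then $S\cap T$, $S\smallsetminus T$, $T\smallsetminus S$ are all nonempty, so by (2) $\langle s,t\rangle$ has even order, impossible inside $K$. Hence the supports are disjoint or $\langle s\rangle=\langle t\rangle$; in either case $s$ and $t$ commute. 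For part (4): all $p$-cycles are conjugate in $S_n$, so $\langle s\rangle=\langle t\rangle^{g}$ for some $g\in S_n$; the normalizer of $\langle t\rangle$ in $S_n$ contains a copy of $\AGL(1,p)$ acting on $\operatorname{supp}(t)$, inside which ``multiplication by a primitive root of $\f_p$'' is a single $(p-1)$-cycle and hence an odd permutation (as $p$ is odd) — multiplying $g$ by it if necessary gives $g\in A_n$, as required.
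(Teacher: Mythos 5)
Your parts (1), (3) and (4) essentially coincide with the paper's arguments: in (1) you use Burnside's dichotomy for transitive groups of prime degree, ruling out the affine case by noting that the order-$p$ elements of $\AGL(1,p)$ are translations (the paper instead first proves $K$ is simple — a cosmetic difference), and (3) and (4) are the same deductions. The genuine divergence is in part (2). The paper's proof is a short, completely elementary construction: choosing $x \in S \smallsetminus T$, $y \in T \smallsetminus S$ and $z \in S \cap T$ suitably, it exhibits an explicit word $g = s^i t s^j t^k \in \langle s,t\rangle$ that transposes $x$ and $y$, so that $g$ has even order. Your route — reduce to the primitive case by passing to a block system, invoke the Feit--Thompson theorem to conclude that a primitive group of odd order is solvable and hence affine, and then compute that a linear map acting as a single cycle off its fixed space has order $p^e - p^f$, which is even — is correct in outline and can be completed, but it is far heavier machinery: it genuinely needs Feit--Thompson (note that the authors go out of their way in part (1) to avoid exactly this) together with the structure theory of primitive solvable groups. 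One small slip: for the induced group $\langle \bar s, \bar t\rangle$ on the block set to be primitive you must take a \emph{maximal} nontrivial block system, not a minimal one (or, equivalently, run an induction on $|S \cup T|$ and observe that a minimal counterexample must be primitive); your reduction otherwise goes through unchanged. If you want part (2) without classification-strength input, the paper's explicit word in $s$ and $t$ is the thing to look for.
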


\begin{proof}
(1) First, we note that the action of $K$ on $\{ 1, \ldots,  p\}$  is obviously primitive. We deduce that $K$ is simple. Indeed, if $N$ is a nontrivial, normal subgroup of $K$, then it must be transitive (by primitivity), and in particular its order must be a multiple of $p$. Hence it contains a Sylow $p$-subgroup of $K$ (since the order of such a subgroup is not divisible by $p^2$), and it follows that $N$ contains all the $p$-cycles in $K$, hence $N=K$.
    
We could rely on the Feit-Thomson theorem to argue that, if the order of $K$ were to be odd, then $K$ would be solvable as well as simple, hence $K$ would have to be abelian. However, we can avoid using such a strong result and appeal to Burnside's theorem on transitive permutation groups of prime degree $p$: this asserts that $K$ must be solvable or $2$-transitive. Since $K$ is simple, if $K$ is not abelian, then we must have that $K$ is $2$-transitive, and thus its order is certainly even.

(2) Pick any $x \in S \smallsetminus T$; pick $y \in T \smallsetminus S$ such that $y^t \in S$; and finally, pick $z \in S \cap T$ such that $z^t \in T \smallsetminus S$.
    
There is an integer $i$ such that $x^{s^i} = z$, and of course $y^{s^i} = y$. Apply $t$ and obtain that $x^{s^it} \in T \smallsetminus S$ while $y^{s^it} \in S$. Then pick an integer $j$ such that $y^{s^i t s^j} = x$, and observe that $x^{s^i t s^j} = x^{s^i t} \in T \smallsetminus S$. Finally, pick an integer $k$ so that $x^{s^its^jt^k} = y$, and note that $y^{s^i t s^j t^k} = x^{t^k} = x$.

The permutation $g = s^i t s^j t^k \in \langle s, t \rangle$ thus exchanges $x$ and $y$, so its order must be even. 

(3) If the supports of $s$ and $t$ are not disjoint, then they must be equal, by (2). In this case, by (1), $s$ and $t$ must commute. It is then elementary that $\langle s \rangle = \langle t \rangle$.

(4) First we show that, for any $p$-cycle $s$, there exists a permutation $\tau$ of signature $-1$ such that $s^\tau$ is a power of $s$. For this, it is notationally more convenient to consider $s = (0, 1, \ldots, p-1)$ in the symmetric group of the set $\{ 0, 1, \ldots, p-1 \}$. For any integer $k$, we have 
\[ s^k = (0, k, 2k, \ldots, (p-1)k) \]
where the entries are understood modulo $p$. Now pick for $k$ a generator of the cyclic group $\left( \z/p\z\right)^\times$, and let $\tau$ be the permutation defined by $i^\tau = ki$ (modulo $p$). Then $s^\tau = s^k$, but $\tau$ is just the cycle $(1, k, k^2, \ldots, k^{p-2})$ of length $p-1$, so the signature of $\tau$ is $-1$, as claimed.

We turn to the proof of statement (4) itself. Certainly there exists $\sigma \in S_n$ with $s^\sigma = t$. There is nothing to prove if $\sigma \in A_n$.  If the signature of $\sigma$ is $-1$, it suffices to pick a permutation $\tau$ with signature $-1$ which satisfies $t^\tau = t^k$ for some $k$ (such a permutation exists by the above paragraph), so that $\sigma \tau \in A_n$ and $s^{\sigma \tau} = t^k$. Thus $\langle s \rangle ^{\sigma \tau} = \langle t \rangle$.
\end{proof}

We return to the proof of Theorem \ref{thm-binary-actions-An} and recall that $p>3$ is a prime dividing $|H|$. As the order of $H$ is assumed to be odd, we deduce from (3) of the proposition the existence of $p$-cycles $c_1, \ldots, c_s$ with disjoint supports such that any $p$-cycle in $H$ is a power of some $c_i$. The subgroup $E = \langle c_1, \ldots, c_s \rangle \subset H$ is elementary abelian and normal.

We shall use Lemma \ref{lem-criterion-2-not-implies-3} to prove that the action of $G$ on the cosets of $H$ is not binary, which is the required contradiction.

We may as well assume that $c_1 = (1, 2, \ldots, p)$, and we put $h_1 = c_1$. Consider $\sigma = (p-2, p-1, p) \in A_n$ and 
\[ h_2 = h_1^\sigma = (1, 2, \ldots, p-3, p-1, p, p-2) \, . \]
It is easy to see that 
\[ h_1 h_2 = (2, 4, 6, \ldots, p-3, 1, 3, 5, \ldots, p-4, p-1, p-2, p ) \, . \]
%

% Moreover we have $h_1 h_2 = h_1^\tau$ where 
% %
% \[ \tau = \left(\begin{array}{rrrrrrrrrrrrr}
%  1 & 2 & 3 & \cdots & (p-3)/2 & \cdots & \cdots & & & \cdots & p-2 & p-1 & p  \\ 
%  2 & 4 & 6 & \ldots & p-3& 1 & 3 & 5 & \ldots & p-4 & p-1 & p-2 & p  
% \end{array}\right) \, .\]
% %
% We claim that $\tau \in A_n$. For this, we count the inversions of $\tau$, that is the pairs $i < j$ with $\tau(i) > \tau(j)$. We have such an inversion when $\tau(i)$ is an even number between $2$ and $p-3$ and $\tau(j)$ is an odd number between $1$ and $\tau(i) - 1$, giving $\frac{\tau(i)}{2}$ inversions for each such value of $\tau(i)$. In total this produces
% %
% \[  \sum_{k=1}^{\frac{p-3}{2}} k = \frac{1}{2} \frac{(p-3)}{2} \frac{(p-1)}{2} \]
% %
% inversions, and we should add the inversion with $i=p-2$, $j=p-1$. 
%
We put $h_3 = (h_1h_2)^{-1}$ so that $h_1h_2h_3= 1$, and $h_3$ is a $p$-cycle. By (4) of Proposition \ref{prop-p-cycles}, we may choose $\tau \in A_n$ so that $h_1^\tau$ is a power of $h_3$; moreover we can choose $\tau$ with support in $\{ 1, 2, \ldots, p \}$ (by applying the proposition to $A_p$ rather than $A_n$). Thus if we put $H_1 = H$, $H_2 = H^\sigma$ and $H_3 = H^\tau$, then $h_i \in H_i$ for $i= 1, 2, 3$, and $H_i$ is the stabilizer of some coset of $H$ in $G$.

Note that, since $h_1$ and $h_2$ are clearly not powers of each other, the same is true of $h_1h_2$ and so $H_1$, $H_2$ and $H_3$ are distinct.

We now apply (2) from Lemma \ref{lem-criterion-2-not-implies-3}. As we assume that we are dealing with a binary action, we conclude that it must be possible to pick $h_2' \in H_1 \cap H_2$ and $h_3' \in H_3$ such that $h_1 h_2' h_3' = 1$. 

Just as $H_1 = H$ has the normal subgroup $E_1 = E = \langle h_1, c_2, \ldots, c_s \rangle$, the subgroup $H_i$ has the normal subgroup $E_i = \langle h_i, c_2, \ldots, c_s \rangle$, for $i= 2, 3$ (this remark uses the observation that for $i > 1$, we have $c_i^\sigma = c_i^\tau = c_i$). It follows that $H_1 \cap H_2$ normalizes $E_1 \cap E_2 = \langle c_2, \ldots, c_s \rangle$, as well as $E_1$ and $E_2$ individually. If $\Lambda \subset \{ 1, 2, \ldots, n \}$ denotes the (disjoint) union of the supports of the elements $c_i$ for $1 \le i \le s$, we see that $H_1 \cap H_2$ preserves $\Lambda$ but also $\Lambda \smallsetminus \{ 1, \ldots, p \}$ -- the latter being the union of the supports of the $c_i$'s for $1 < i \le s$. As a result, $H_1 \cap H_2$ preserves $\Delta =  \{ 1, \ldots, p \}$.

In particular, $h_2'$ induces a permutation of $\Delta$, and the latter must normalize both $\langle h_1 \rangle$ and $\langle h_2 \rangle$. However, we have the following lemma.

\begin{lem} \label{lem-inter-normalizers}
Let $N_i$ be the normalizer of $\langle h_i \rangle$ in $S_p$, for $i=1, 2$. Then $N_1 \cap N_2$ is trivial for $p>5$, and has order $4$ for $p=5$.
\end{lem}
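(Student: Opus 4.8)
The plan is to identify both normalizers explicitly and then run a support‑counting argument. For any $p$-cycle $c$ the normalizer $N_{S_p}(\langle c\rangle)$ is the affine group $\AGL(1,p)$, of order $p(p-1)$. Identifying $\{1,\dots,p\}$ with $\z/p\z$ so that $h_1$ becomes the translation $x\mapsto x+1$, we have $N_1=\{x\mapsto ax+b\}$. Since $h_2=h_1^{\sigma}$ with $\sigma=(p-2,p-1,p)$, we get $N_2=N_1^{\sigma}$, so that $N_1\cap N_2=\{g\in N_1:\sigma g\sigma^{-1}\in N_1\}$. Observe at the outset that $\langle h_1\rangle\neq\langle h_2\rangle$: otherwise $\sigma$ would normalize $\langle h_1\rangle$, i.e.\ $\sigma\in N_1$, which is impossible since $\sigma$ has support of size $3$ while every nonidentity element of $N_1$ has support of size $p-1$ or $p$.

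Next I would record a structural fact valid for all $p$. The multiplier homomorphism $\mu\colon N_1\to(\z/p\z)^{\times}$, sending $g$ to the power to which it conjugates $h_1$, has kernel $C_{S_p}(h_1)=\langle h_1\rangle$. Its restriction to $N_1\cap N_2$ has kernel $\langle h_1\rangle\cap N_2$; a nontrivial element there would be a $p$-cycle inside $N_2\cong\AGL(1,p)$, hence would lie in the unique Sylow $p$-subgroup $\langle h_2\rangle$ of $N_2$, contradicting $\langle h_1\rangle\cap\langle h_2\rangle=1$. So $\mu$ embeds $N_1\cap N_2$ into $(\z/p\z)^{\times}$; in particular $N_1\cap N_2$ is cyclic of order dividing $p-1$.

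The heart of the matter is the following estimate. For $g\in N_1\cap N_2$ put $c=g^{-1}\sigma g\,\sigma^{-1}$; as a product of the two elements $g^{-1}$ and $\sigma g\sigma^{-1}$ of $N_1$, $c$ lies in $N_1$, while as a product of the two $3$-cycles $g^{-1}\sigma g$ and $\sigma^{-1}$ its support has at most $6$ elements. A nonidentity affine map has support $p$ (a translation) or $p-1$ (otherwise), so for $p\geq 11$ this forces $c=1$, i.e.\ $g$ centralizes $\sigma$; and since an affine map carries arithmetic progressions to arithmetic progressions and must respect the cyclic order of $(p-2,p-1,p)$, a short computation shows the only affine map centralizing $\sigma$ is the identity once $p>3$. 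Hence $N_1\cap N_2=1$ for $p\geq 11$. For $p=7$ the estimate only leaves the possibility that $c$ has support exactly $6=p-1$; but $c$ is even (a product of two $3$-cycles), so its multiplier cannot have order $2$ (a product of three transpositions is odd) or $6$ (a $6$-cycle is odd) and must have order $3$, while the disjointness of supports forces the support of $g^{-1}\sigma g$ to be a $3$-term arithmetic progression inside $\{1,2,3,4\}$; this pins $g$ down to one of finitely many explicit involutions $x\mapsto -x+b$, each of which one checks makes $c$ non-affine. So $N_1\cap N_2=1$ for all $p>5$ as well.

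Finally, for $p=5$ we already know $N_1\cap N_2$ is cyclic of order dividing $4$, so it suffices to exhibit an element of order $4$: one verifies directly that $g\colon x\mapsto 2x+2$ on $\z/5\z$ satisfies $\sigma g\sigma^{-1}\colon x\mapsto 3x$, hence $g\in N_1\cap N_2$, and its multiplier $2$ has order $4$ in $(\z/5\z)^{\times}$. Thus $|N_1\cap N_2|=4$. (Alternatively, $S_5$ permutes its six Sylow $5$-subgroups transitively, and $N_1$, the stabilizer of $\langle h_1\rangle$, acts on the remaining five of them as $\AGL(1,5)$ acts on $\z/5\z$, whose point stabilizers have order $4$.) I expect the borderline value $p=7$ to be the one genuine obstacle: there the crude support bound $6$ coincides with $p-1$, so one really must combine the parity of $c$ with the arithmetic‑progression constraint to dispose of the last few configurations by hand.
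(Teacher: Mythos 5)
Your proof is correct, and it diverges from the paper's in two substantive ways. First, the middle step: the paper shows that the ``multipliers'' $k$ and $\ell$ (the powers to which $g$ conjugates $h_1$ and $h_2$) coincide, via a counting argument on the displacements $x^s-x$ that needs $p\ge 11$, and only then deduces that $\sigma^g\sigma^{-1}$ centralizes $h_1$, has support at most $6$, and hence is trivial. You bypass the multiplier comparison entirely by observing that $c=g^{-1}(\sigma g\sigma^{-1})$ is a product of two elements of $N_1$, hence lies in $N_1=\AGL(1,p)$, whose nonidentity elements move at least $p-1$ points; since $c$ moves at most $6$ points this kills $c$ for $p\ge 11$ in one stroke. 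Both proofs then finish identically, by checking that no nontrivial affine map centralizes the $3$-cycle $\sigma$ for $p>3$. Second, the small primes: the paper delegates $p=5$ and $p=7$ to a machine computation, whereas you dispose of them by hand --- for $p=7$ by combining the parity of $c$ (forcing multiplier of order $3$) with the observation that the support of $\sigma^g$ must be a $3$-term arithmetic progression inside $\{1,2,3,4\}$, leaving only the two involutions $x\mapsto -x+1$ and $x\mapsto -x+2$ to check (I verified both yield a non-affine $c$); and for $p=5$ by your structural remark that the multiplier map embeds $N_1\cap N_2$ into $(\z/p\z)^\times$, so it suffices to exhibit the order-$4$ element $x\mapsto 2x+2$ (your $\PGL(2,5)$ reformulation is also correct). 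What your route buys is a uniform, computation-free treatment of all primes and a cleaner logical skeleton; what it costs is that the $p=7$ case still requires a short explicit case check, which you compress into ``one checks'' --- if this were to be written up, those two verifications should be displayed.
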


We postpone the proof of the lemma and explain how we derive a contradiction. Indeed, as $H$, and so also $h_2'$, has odd order, the lemma implies that $h_2'$ is the identity on $\Delta$. By inspection of the relation $h_1 h_2' h_3' = 1$, we deduce that $h_3'$ also preserves $\Delta$ and indeed that $h_3' = h_1^{-1}$ on $\Delta$. Then $h_3'$ is in the setwise stabilizer of $\Delta$ in $H_3$ and acts as a $p$-cycle on $\Delta$; thus $h_3'$ acts on $\Delta$ as a power of $h_3$. Since $h_1^{-1}$ is clearly not a power of $h_3$ we obtain a contradiction.

%However, argueing as above we see that $h_3'$ must normalize $\langle h_1   \rangle$ and, more to the point, also $\langle h_3 \rangle$. This is impossible, as we would draw that $h_1$ normalizes the group generated by $h_3^{-1} = h_1 h_2$, when in fact 
%
%\[ (h_1 h_2)^{h_1} = (3, 5, \ldots, p-2, 2, 4, 6, \ldots, p-3, p-1, 1) \, . \]
%
%This cycle is clearly not a power of $h_1h_2$. \pierre{this cycle and $h_1h_2$ both have the subsequence $3,5$ in common, so if one were a power of the other they would be equal, so $h_1$ would commute with $h_1 h_2$ and so with $h_2$, but it does not. I don't think I want to write this. What do you think?}

This contradiction concludes the proof of Theorem \ref{thm-binary-actions-An} in the case under consideration, namely when $H$ has odd order but is not a $3$-group.

\begin{proof}[Proof of Lemma \ref{lem-inter-normalizers}] We give a proof for $p \ge 11$, as it allows for an easier exposition; the cases $p=5$ and $p=7$ can be worked out by direct computation (we have used a computer for this, for safety). Again it is more convenient to work in the symmetric group of $\z/p\z = \{ 0, 1, \ldots, p-1 \}$, and so we consider 
\[ h_1 = (0, 1, \ldots, p-1) \, , \quad  \sigma = (p-3, p-2, p-1) \, ,   \]
and 
\[ h_2 = h_1^\sigma = (0, 1, \ldots, p-4, p-2, p-1, p-3) \, . \]
We pick a permutation $g$ which normalizes both $\langle h_1 \rangle$ and $\langle h_2  \rangle$, and we wish to show that $g=1$. 

It is classical that the normalizer of $\langle h_1 \rangle$ consists of all permutations $g$ with $x^g = kx +t$ for $x \in \z/p\z$, where $k \in \left(\z/p\z\right)^\times$ and $t \in \z/p\z$. With this notation, we have $h_1^g = h_1^k$.

Now let $\ell \in \left(\z/p\z\right)^\times$ be such that $h_2^g = h_2^\ell$. First we show that $k = \ell$. For this, we put $s = h_2^g$ and study the values of the form $x^s - x$ for $0 \le x < p$. More precisely, put 
\[ S= \{ x : x^s - x = k ~\textnormal{mod}~ p \} \, , \quad T= \{ x : x^s - x = \ell ~\textnormal{mod}~p \} \, ;\]
we shall show that $S \cap T \ne \emptyset$. On the one hand, for $x = y^g$ with $0 \le y < p-4$, we have 
\[ x^s - x = (y^{h_2})^g - y^g = (y+1)^g - y^g = k \, . \]
And so the size of $S$ is at least $p-4$. On the other hand, we have $s = h_2^\ell$. It is obvious that, if $0 \le x \le p-4$ and $0 \le x+\ell \le p-4$, then $x^{h_2^\ell} = x + \ell$; this implies that, for the same values of $x$ and for $x+\ell > p-1$, we have $x^{h_2^\ell} = x + \ell$ modulo $p$. We have found that $x^s - x = \ell$ modulo $p$ for all but 6 values of $x$, namely, the exceptions could occur when $x \in \{ p-3, p-2, p-1 \}$ or when $x + \ell  \in \{ p-3, p-2, p-1 \}$. In other words the size of $T$ is no less than $p-6$. We see that $S$ must intersect $T$ nontrivially, lest we should conclude that $p \le 10$. We have proved that $k=\ell$ modulo $p$.

Next we write $(h_1^k)^{\sigma^g} =(h_1^g)^{\sigma^g} = (h_1^\sigma)^g = h_2^g =  h_2^k = (h_1^\sigma)^k = (h_1^k)^\sigma$. In particular from $(h_1^k)^{\sigma^g} = (h_1^k)^\sigma$ we see that $\sigma^g \sigma^{-1}$ centralizes the $p$-cycle $h_1^k$, and so also $h_1$ itself. However, we have recalled the description of the normalizer of $\langle h_1 \rangle$, from which it follows that the centralizer of $h_1$ is $\langle h_1 \rangle$. As $\sigma$ is a 3-cycle, we see that $\sigma^g \sigma^{-1}$ moves at most 6 points, and so it cannot be a $p$-cycle with $p \ge 7$. The only element in $\langle h_1 \rangle$ which is not a $p$-cycle is the identity, so we conclude that $\sigma^g = \sigma$.

Thus $g$ preserves the support of $\sigma$, and the permutation $g_0$ induced by $g$ on this support is a power of $\sigma$. Consider the possibility $g_0 = \sigma$. This leads to the equations 
\[ \left\{ \begin{array}{ll}
 k(p-3) + t & = p-2  \\ 
 k(p-2) + t & = p-1 \\
 k(p-1) + t & = p-3 
 \end{array}\right.
 \]
which have no solutions $k, t \in \z/p\z$ unless $p=3$. Similarly, $g_0 = \sigma^{-1}$ leads to a contradiction. There remains only $g_0 = 1$. Writing the associated system of equations shows readily that $k=1$ and $t=0$, so that $g= 1$.
\end{proof}

\subsection{When \texorpdfstring{$H$ is a $3$-group}{H is a 3 group}}

We finish the proof of Theorem \ref{thm-binary-actions-An} in the remaining case, that is, when $G=A_n$ acts on the cosets of a subgroup $H$ which is a non-trivial $3$-group. We asssume that the action is binary and look for a contradiction.

\begin{lem}\label{l: n9 3c}
For $n \ge 9$, the subgroup $H$ must contain $3$-cycles.
\end{lem}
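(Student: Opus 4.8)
The plan is to mimic the strategy used in the $p>3$ case, but now working with the prime $p=3$ and using the structure theory of $p$-cycles developed in Proposition \ref{prop-p-cycles}. As in the previous subsection, let $\CC$ be a conjugacy class of $3$-elements of $G$ of maximal $3$-fixity in the action on the cosets of $H$, and pick $g \in \CC \cap H$. If the elements of $\CC$ are already $3$-cycles we are done, so assume $g = c_1 c_2 \cdots c_k$ is a product of $k \ge 2$ disjoint $3$-cycles; then $\CC$ consists of all such products. The difficulty compared to the $p>3$ case is that the trick ``$h = c_1^x c_2^{-1}\cdots c_k^{-1}$ with $x \in \{2,\dots,p-2\}$'' is unavailable, since $\{2,\dots,p-2\}$ is empty when $p=3$; so a different device is needed to produce, inside the connected component of $g$ in $\Gamma(\CC)$, an element that manifestly cannot lie in a $3$-group unless $H$ contains a $3$-cycle.

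First I would look for a commuting pair $g, h \in \CC$ with $gh^{-1}$ (or $hg^{-1}$) again in $\CC$, chosen so that $gh = g h$ visibly has a cycle type incompatible with $H$ being a $3$-group, thereby forcing (via Corollary \ref{coro-connected-comp}) a contradiction unless $k=1$. The natural move is to leave $c_1$ alone but ``rotate'' the remaining $3$-cycles: on the $4k-4$ points in the support of $c_2\cdots c_k$ one has room (using $n \ge 9$, hence $4k \ge 12$ when $k \ge 3$, and a separate small argument for $k=2$) to choose $h$ of the same cycle type, commuting with $g$, with $gh^{-1} \in \CC$, but with $gh$ containing a cycle of length $>3$ or an even number of transpositions' worth of structure — in any case an element of order not a power of $3$. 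Concretely, for $k \ge 2$ I would try to use two of the $3$-cycles, say $c_2 = (a\,b\,c)$ and $c_3 = (d\,e\,f)$ (when $k \ge 3$), or manufacture extra points from $\Fix(g)$ (when $k=2$, using $n \ge 9$ so that $g$ on $9$ points leaves at least... — actually $k=2$ gives support size $8 \le n$, so $n\ge 9$ leaves a fixed point available), to build an edge $g \equiv h$ in $\Gamma(\CC)$ with $gh$ of order divisible by a prime $\ne 3$, e.g.\ order $2$ or $5$ or a cycle of length $9$. Then $h \in H$ by Corollary \ref{coro-connected-comp}, but $gh \in H$ has order not a power of $3$, contradicting that $H$ is a $3$-group.

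The main obstacle is the case $k=2$ (support size $8$): here the bound $n \ge 9$ is exactly what is needed, and one must be careful that the two $3$-cycles making up $g$ and $h$ overlap in a controlled way so that $[g,h]=1$, so that $gh^{-1}$ has the right cycle type to lie in $\CC$ (two disjoint $3$-cycles, using Proposition \ref{prop-p-cycles}(4) to handle the $A_n$-versus-$S_n$ conjugacy subtlety), and yet $gh$ is not a $3$-element. This is a finite, explicit check: one exhibits specific permutations on $\{1,\dots,9\}$ and verifies the three conditions. For $k \ge 3$ the extra room makes the construction routine: work inside one $S_{12}$-block spanned by $c_1, c_2, c_3$, reproduce the $k=3$ gadget there, and leave $c_4,\dots,c_k$ fixed. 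I would present the $k=2$ gadget first as the representative case and then indicate how the $k \ge 3$ case reduces to it. The rest of the argument — applying Corollary \ref{coro-connected-comp} to conclude $h \in H$ and then observing $gh \in H$ has the wrong order — is immediate.
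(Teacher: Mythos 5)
There is a fundamental obstruction to your central device. You propose to find $g,h\in\CC$ joined by an edge in $\Gamma(\CC)$ such that $gh$ has order divisible by a prime other than $3$ (you suggest order $2$ or $5$, or a $9$-cycle). But an edge in $\Gamma(\CC)$ requires $[g,h]=1$, and then $(gh)^3=g^3h^3=1$: the product of two commuting elements of order $3$ always has order dividing $3$, so it is a product of disjoint $3$-cycles (or the identity), never an element of order $2$, $5$ or $9$. The same applies to any product of two vertices lying in a common abelian subgroup, so no single edge can ever certify that $H$ is not a $3$-group. (There is also an arithmetic slip throughout: a product of $k$ disjoint $3$-cycles has support of size $3k$, not $4k$; for $k=2$ the support has size $6$, and $n\ge 9$ is needed not to supply fixed points but to fit two elements such as $(123)(456)$ and $(123)(789)$ whose ``second'' cycles are disjoint.)

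The salvageable germ of your idea is to aim not for an element of order prime to $3$, but for a $3$-cycle itself, which is exactly the paper's route for $k=2$: one checks that $h_1=(123)(456)$, $h_2=(123)(789)$ and $h_3=(123)(465)$ form a path in $\Gamma(\CC)$, and $h_1h_3=(132)$ is a $3$-cycle lying in the component group, hence in $H$ by Corollary~\ref{coro-connected-comp}. For $k\ge 3$ the paper argues quite differently from what you propose: it verifies that $\Gamma(\CC)$ is connected for the class of $(123)(456)(789)$ in $A_9$, so the component group is all of $A_9$ and in particular has even order; a separate induction lemma propagates the even-order conclusion to all $\ell\ge 3$ and all $n\ge 3\ell$, contradicting the fact that the $3$-group $H$ must contain a component group. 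The element of even order here is produced by the \emph{entire} component group (generated by many vertices), not by a product of two adjacent ones — which is why the paper's argument succeeds where a single-edge construction cannot.
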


\begin{proof}
Let $\CC$ denote a conjugacy class of $3$-elements of $G$ of maximal 3-fixity and note that $H\cap\CC\neq\emptyset$. There is nothing to prove if $\CC$ contains $3$-cycles, so assume that $\CC$ is the conjugacy class of $c_1 \cdots c_k$ where each $c_i$ is a $3$-cycle and $k > 1$. 

It is easy to check that, when $\CC$ is the conjugacy class of $(123)(456)(789)$ in $A_9$, then the graph $\Gamma(\CC)$ is connected. In particular, it has a single component group which is the whole of $A_9$. It follows from the next lemma that, for $k \ge 3$, the component groups of $\Gamma(\CC)$ have even order. This is absurd, as $H$ contains such a group by Corollary \ref{coro-connected-comp}.

Thus we must only consider the possibility $k=2$. In $\Gamma(\CC)$ we see that, if $n\geq 9$, then $h_1=(123)(456)$ is connected to $h_2=(123)(789)$ and $h_2$ is, in turn, connected to $h_3=(123)(465)$. Thus $h_1$ and $h_3$ are in the same connected component of $\Gamma(\CC)$; moreover, the product of these two elements is $(132)$. By Corollary \ref{coro-connected-comp}, we see that $H$ contains $3$-cycles.
\end{proof}

% begin

% And now, a lemma that is useful when you fix $p$ and try to prove things for greater and greater values of $k$. We'll need it for $p=3$, I guess. Unfortunately we need some heavier notation: let $C_{k}^n$ denote the conjugacy class, within $A_n$, of a product of $k$ different $p$-cycles with disjoint supports. (For $k \ge 2$ all such products are conjugate in $A_n$.) Whenever $C$ is a conjugacy class of $p$-elements, for lack of a better notation I'll write $[C]$ for the group generated by a connected component of $\Gamma(C)$ (thus $[C]$ is defined up to conjugacy).

In the course of the proof, we have relied on the following lemma (which we only need for $p=3$ of course):

\begin{lem}
Let $p$ be an odd prime, and let $k \ge 2$. Let $\CC$ denote the conjugacy class, in the group $A_{pk}$, containing all products of $k$ different $p$-cycles with disjoint supports. Suppose that the component groups of $\Gamma(\CC)$ have even order. Then for all $\ell \ge k$ and all $n \ge p\ell$, the component groups of $\Gamma(\CC')$ have even order, where $\CC'$ is the conjugacy class, in the group $A_n$, of a product of $\ell$ different $p$-cycles with disjoint supports. 
\end{lem}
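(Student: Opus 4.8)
The natural approach is induction on $\ell$, with the base case $\ell = k$ being exactly the hypothesis. So fix $\ell > k$, assume the result holds for $\ell - 1$ (and all admissible $n$), and let $\CC'$ be the class in $A_n$ of products of $\ell$ disjoint $p$-cycles, with $n \ge p\ell$. I want to show every component group of $\Gamma(\CC')$ has even order.

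**Reducing to a fixed component group.** All component groups of $\Gamma(\CC')$ are conjugate in $A_n$, so it suffices to exhibit one connected component whose elements generate a subgroup of even order. Take a convenient element, say $s = c_1 c_2 \cdots c_\ell$ with $c_1 = (1\,2\,3\cdots p)$ supported on $\{1, \dots, p\}$ and the remaining $c_i$ supported on disjoint blocks inside $\{p+1, \dots, n\}$. The key structural observation is that any edge of $\Gamma(\CC^{\mathrm{rat}})$ out of $s$ involves an element $t$ commuting with $s$; restricting attention to elements $t$ of $\CC'$ that fix the block $\{1, \dots, p\}$ pointwise and agree with $c_1$ there, one gets a subgraph of $\Gamma(\CC')$ isomorphic (via "delete the first $p$-cycle") to $\Gamma(\CC'')$, where $\CC''$ is the class of products of $\ell - 1$ disjoint $p$-cycles in $A_{n-p}$ (acting on $\{p+1, \dots, n\}$). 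Since $n - p \ge p(\ell-1)$, the induction hypothesis applies to $\CC''$: the component of $\Gamma(\CC'')$ containing $c_2 \cdots c_\ell$ generates a subgroup $M \le A_{\{p+1,\dots,n\}}$ of even order. Pulling back, the component of $\Gamma(\CC')$ containing $s$ contains $\langle c_1 \rangle \times M'$-worth of vertices for a suitable copy $M'$ of $M$ (more precisely it contains all $c_1 m$ with $m$ ranging over the relevant $\CC''$-vertices), so its component group contains $M'$ and hence has even order.

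**Handling the bookkeeping.** The one point that needs care is verifying that "adding a fixed $p$-cycle $c_1$ on a disjoint block" really is a graph embedding $\Gamma(\CC'') \hookrightarrow \Gamma(\CC')$ that lands inside a single connected component: if $m, m'$ are joined in $\Gamma(\CC'')$ then $m, m'$ commute and $mm'^{-1} \in \CC''$ (or $m'm^{-1}$), whence $c_1 m, c_1 m'$ commute and $(c_1 m)(c_1 m')^{-1} = m m'^{-1} \in \CC''$ has $\ell - 1$ cycles, not $\ell$ — so this is \emph{not} literally an edge of $\Gamma(\CC')$. This is the subtlety: one must instead argue via $\Gamma(\CC'^{\mathrm{rat}})$ and Proposition~\ref{prop-connected-comp}, or rerun the explicit quad/cycle-exchange moves of Example~\ref{ex-compute-quads} and Proposition~\ref{prop-graphs-An} with the extra cycle $c_1$ as a passive spectator. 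The cleanest fix is: the moves used in the proof of Proposition~\ref{prop-graphs-An} (and their $p$-cycle analogues, cf.\ the lemma on $p$-cycles) only ever modify cycles on disjoint supports one or two at a time, always producing genuine edges of $\Gamma(\CC')$; so any path in the "$\ell-1$ cycles" graph, with $c_1$ appended throughout, becomes a genuine path in $\Gamma(\CC')$. Thus the component of $s$ in $\Gamma(\CC')$ surjects onto (indeed contains a faithful copy of) the component of $c_2\cdots c_\ell$ in $\Gamma(\CC'')$, and the component group contains $M'$.

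**The main obstacle.** The delicate step is precisely this embedding/lifting argument: ensuring that connectivity in the smaller graph, after padding with the inert cycle $c_1$, transfers to connectivity in $\Gamma(\CC')$ rather than merely in $\Gamma(\CC'^{\mathrm{rat}})$, and that the even-order witness $M$ survives the lift (it does, since conjugation by a disjoint-support element is an isomorphism preserving order). Everything else — the choice of base element, the conjugacy of component groups, the inequality $n - p \ge p(\ell - 1)$ — is routine. If one is willing to work with $\Gamma(\CC'^{\mathrm{rat}})$ and invoke Proposition~\ref{prop-connected-comp} directly, the obstacle essentially evaporates, at the cost of a slightly weaker-looking but equally usable statement.
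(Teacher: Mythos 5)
You have correctly located the crux: padding both endpoints of an edge with the same inert cycle $c_1$ gives $(c_1m)(c_1m')^{-1}=mm'^{-1}$, a product of only $\ell-1$ disjoint $p$-cycles, so this is not an edge of $\Gamma(\CC')$. But neither of your proposed repairs closes this gap. Passing to $\Gamma(\CC'^{\mathrm{rat}})$ changes nothing: every element of $\CC'^{\mathrm{rat}}$ is a nontrivial power of a product of $\ell$ disjoint $p$-cycles and hence is itself such a product, so $mm'^{-1}$ lies in neither $\CC'$ nor $\CC'^{\mathrm{rat}}$. The ``passive spectator'' rerun of the moves fails for the same structural reason: by definition, an edge of $\Gamma(\CC')$ between $t$ and $t'$ forces $t(t')^{-1}$ (or $t'(t)^{-1}$) to have support of size exactly $p\ell$, so \emph{every} cycle must change across a single edge --- there are no edges of $\Gamma(\CC')$ that modify only one or two cycles once $\ell\ge 3$ (compare Lemma~\ref{lem-edge-quads}, where along an edge every quad $q_i$ is replaced by a distinct $q_i'$; the one-quad-at-a-time lemma is proved by inserting intermediate vertices, not by keeping quads inert along an edge). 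So your claimed embedding $\Gamma(\CC'')\hookrightarrow\Gamma(\CC')$ has no edges at all in its image, and the argument collapses.

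The missing idea --- which is essentially the entire content of the paper's proof --- is to pad the two endpoints \emph{asymmetrically}: reduce by induction to $\ell=k+1$, take a $p$-cycle $c$ with support disjoint from $\{1,\dots,pk\}$, and send an edge $\{g,h\}$ of $\Gamma(\CC)$ to the pair $gc$, $hc^{-1}$. These commute, and $(gc)(hc^{-1})^{-1}=(gh^{-1})c^{2}$ is a product of $k+1$ disjoint $p$-cycles because $c^{2}$ is again a $p$-cycle --- this is exactly where the hypothesis that $p$ is odd enters, and your proposal never uses it. Propagating along a path (with the exponent of $c$ alternating between $+1$ and $-1$) shows that if $g,h$ lie in one component of $\Gamma(\CC)$ then $gc$ and $hc^{\pm1}$ lie in one component of $\Gamma(\CC')$; restricting the resulting component group to $\{1,\dots,pk\}$ surjects onto the component group of $\Gamma(\CC)$, which has even order, and the conclusion follows. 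Your induction scheme and reduction to a single component are fine; it is this one twist that your write-up is missing and that cannot be waved away.
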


\begin{proof}
By induction it is enough to prove this for $\ell = k+1$ and any $n \ge (k+1)p$.  Let $c$ be any $p$-cycle with support disjoint from $\{ 1, 2, \ldots, pk \}$.

If $g, h \in \CC$ are joined by an edge in $\Gamma(\CC)$, we put $g' = gc$ and $h' = hc^{-1}$. Then $g', h' \in \CC'$, $[g', h']= 1$, and $g'(h')^{-1} = (gh^{-1}) c^2 \in \CC'$ (since $p$ is odd). Thus $g'$ and $h'$ are joined by an edge in $\Gamma(\CC')$. 

It follows readily that, if $g, h$ are in the same component of $\Gamma(\CC)$, then $gc$ and $hc^{\pm 1}$ are in the same component of $\Gamma(\CC')$, for some sign $\pm 1$. The lemma is now clear.
\end{proof}

Now we complete the proof of Theorem~\ref{thm-binary-actions-An}. Assume, first, that $H$ contains a $3$-cycle.
We deduce again from (3) of Proposition \ref{prop-p-cycles} (which holds for $p=3$ as well) the existence of $3$-cycles $c_1, \ldots, c_s$ with disjoint supports such that any $3$-cycle in $H$ is a power of some $c_i$. The subgroup $E = \langle c_1, \ldots, c_s \rangle \subset H$ is elementary abelian and normal. We will now assume that $s \ge 2$, leaving the case $s=1$, which is similar but easier, to the reader.

We assume that $c_1= (123)$ and $c_2= (456)$. Put $\sigma = (23)(14)$, $\tau = (234)$ and define $h_1= c_1$, $h_2= c_1^\sigma = (432)$ and $h_3 = h_1^\tau = (134)$. We have $h_1h_2 h_3 = 1$ and $h_i \in H_i$ for $i=1,2,3$, where $H_1= H$, $H_2 = H^\sigma$ and $H_3 = H^\tau$.

As above, we now apply (2) from Lemma \ref{lem-criterion-2-not-implies-3}. We conclude that it must be possible to pick $h_2' \in H_1 \cap H_2$ and $h_3' \in H_3$ such that $h_1 h_2' h_3' = 1$. 

We note that $c_2^\sigma = (156)$ and $c_2^\tau = (256)$, while $c_i^\sigma = c_i^\tau = c_i$ for $i>2$. Following the argument for $p>3$, we arrive at the conclusion that $h_2'$ stabilizes $\Delta = \{ 1,2,3,4,5,6 \}$, and that it must normalize both $\langle h_1, c_2 \rangle$ and $\langle h_2, c_2^\sigma \rangle$. A direct computation shows that the intersection of the normalizers in $A_6$ of these subgroups has order $4$; as $h_2'$ has odd order, we conclude that $h_2'$ induces the identity of $\Delta$. A similar reasoning and computation with $h_3'$ shows that this permutation is also the identity on $\Delta$. 

This is absurd, however, as we see from the identity $h_1 h_2' h_3'=1$, since $h_1$ is not the identity on $\Delta$. This completes the proof of Theorem~\ref{thm-binary-actions-An} in the case where $H$ is a 3-group containing a 3-cycle. Lemma~\ref{l: n9 3c} then yields the proof provided $n\geq 9$. 

We are left with the case when $n\in\{6,7,8\}$. If $n=6$ and $H$ contains a $3$-cycle, then we are already done. If $n=6$ and $H$ does not contain a $3$-cycle, then $H=\langle h\rangle$ where $h$ is the product of two 3-cycles. But now the action of $G=A_6$ on the set of cosets of $H$ is permutation isomorphic (via an exceptional outer automorphism) to the action of $G=A_6$ on the set of cosets of a subgroup $H'$ generated by a $3$-cycle, so the proof is complete in this case too. 

Finally if $n\in\{7,8\}$, then there exists $M\cong A_6$ such that $H<M<G$. We know that the action of $M$ on the set of cosets of $H$ in $M$ is not binary and so the same is true for the action of $G$ by \cite[Lemma~1.7.2]{gls_binary}.
This final contradiction concludes the proof of Theorem \ref{thm-binary-actions-An} when $n \ge 9$.

\bibliography{myrefs}
\bibliographystyle{amsalpha}

\end{document}